\newtheorem{lemma}{Lemma}
\newtheorem{theorem}{Theorem}[section]
\newtheorem{exm}{Example}
\begin{document}

\title{Asymptotic Behaviour of an Infinitely-Many-Alleles Diffusion with Symmetric Overdominance}

\author{Youzhou Zhou}
\address{Department of Mathematics and Statistics\\
McMaster University, 1280 Main Street West,\\
Hamilton Ontario, Canada L8S 2L4}
\email{zhouy52@math.mcmaster.ca}

\subjclass[2000]{Primary }

\keywords{Homozygosity, Phase Transition, Overdominant Selection}

\date{\today}

\dedicatory{}

\begin{abstract}
This paper considers the limiting distribution of $\pi_{\lambda,\theta}$, the stationary distribution of the infinitely-many-alleles diffusion with symmetric overdominance
 \cite{MR1626158}.  In \cite{MR2519357} the large deviation principle for $\pi_{\lambda,\theta}$ indicates that there are countably many phase transitions for the limiting distribution of $\pi_{\lambda,\theta}$, and the critical points are $\lambda=k(k+1), k\geq1$. The asymptotic behaviours at those critical points , however, are unclear. This article provides a definite description of the critical cases.
\end{abstract}

\maketitle

\section{Introduction}

  The infinitely many alleles model is an extensively studied model in population genetics. In this model, mutations always generate completely new allele types, and $x=(x_{1},x_{2},\cdots)$, where $x_{i}$'s are arranged decreasingly and $\sum_{i=1}^{\infty}x_{i}=1$, is usually used to represent the allele frequency.   The infinitely-many-neutral-alleles diffusion \cite{MR615945} is the associated diffusion process characterized by generator
  $$
G=\frac{1}{2}\sum_{i,j=1}^{\infty}x_{i}(\delta_{ij}-x_{j})\frac{\partial^{2}}{\partial x_{i}\partial x_{j}}-\frac{\theta}{2}\sum_{i=1}^{\infty}x_{i}\frac{\partial}{\partial x_{i}}, x\in \bar{\triangledown}_{\infty},
$$
where $\bar{\triangledown}_{\infty}=\{(x_{1},x_{2},\cdots)|x_{1}\geq x_{2}\geq\cdots\geq0,\sum_{i=1}^{\infty}x_{i}\leq1\}.$
  The Poisson-Dirichlet distribution \cite{MR0368264}, hereafter denoted as PD($\theta$), is its stationary distribution. If symmetric overdominant selection is considered, we will end up with the infinitely-many-alleles diffusion with symmetric overdominance\\
  \cite{MR1626158}, characterized by 
  $$
G_{\sigma}=G+\sigma\sum_{i=1}^{\infty}x_{i}(x_{i}-\varphi_{2}(x))\frac{\partial}{\partial x_{i}}, x\in\bar{\triangledown}_{\infty}, 
$$
where $\varphi_{2}(x)=\sum_{i=1}^{\infty}x_{i}^{2}$. In population genetics, the homozygosity, denoted by H$_{2}$, is defined to be $\varphi_{2}(x)$ for a given allele frequency $x$.  H$_{2}$ is a random variable for allele frequency is random.

 The infinitely-many-alleles diffusion with symmetric overdominance has stationary distribution $\pi_{\sigma}$ defined as 
 $$
\pi_{\sigma}(dx)=C_{\sigma}\exp\{\sigma \varphi_{2}(x)\}\mbox{PD}(\theta)(dx), x\in\triangledown_{\infty},
$$
where $C_{\sigma}$ is a normalized constant.  If $\sigma>0$, then the selection is underdominant; if $\sigma<0$, then the selection is overdominant. In this article, only overdominant selection is considered.

For the infinitely-many-alleles diffusion with symmetric overdominance, random sampling, mutations and selection are all the evolutionary forces involved. It is commonly accepted that mutations contribute substantially to the genetic variability. Even when the very low mutation rate is presented,  overdominant selection, however, can maintain certain amount of biological diversity, please refer to \cite{MR64600} and references therein. Random sampling is the evolutionary force constantly deleting some types. The interactions of those forces determine the unique configuration of the whole population. When mutations and overdominant selection are both large, the effect of overdominant selection is hardly pronounced. This intuitive statement is in fact verified by the numerical results in \cite{Gillespie_role_of_size}. Also some numerical results in \cite{Gillespie_role_of_size} are confirmed  theoretically by Joyce, Krone and Kurtz \cite{MR1951997}, Dawson and Feng \cite{MR2244425}. On the contrary, when mutation is small and overdominant selection is large, the effect of overdominant selection can be observable.  Similarly, this statement is also shown in \cite{MR2519357} through large deviation principle (hereafter LDP, for detailed introduction of LDP, please refer to \cite{MR2571413}) for PD($\theta$)  and $\pi_{\sigma}$ with small mutation and large selection. For PD($\theta$), the speed is $\log\frac{1}{\theta}$ and the LDP rate function is 
$$
J(x)=
\begin{cases}
0,             & x\in L_{1}\\
n-1,           & x\in L_{n},x_{n}>0,n\geq2\\
+\infty,       &x\notin L
\end{cases}
$$
where $L_{n}=\{(x_{1},\cdots,x_{n},0,\cdots)\in\bar{\triangledown}_{\infty}\mid\sum_{i=1}^{n}x_{i}=1\}, 
L=\bigcup_{n=1}^{\infty}L_{n}$.
Here $J(x)$ exhibits some properties similar to energy ladder structure. In order to understand the interaction of small mutation and large selection,  the selection intensity $\sigma$ is regarded as $\sigma(\theta)$, and the LDP for $\pi_{\sigma(\theta)}$ was also considered.  Especially, when $\sigma=\lambda\log\theta(\lambda>0,0<\theta<1)$, $\pi_{\sigma(\theta)}$ is denoted as $\pi_{\lambda,\theta}$. The rate function of the LDP for $\pi_{\lambda,\theta}$ is 
$$
S_{\lambda}(x)=
J(x)+\lambda \varphi_{2}(x)-\inf\left\{\frac{\lambda}{n}+n-1\mid n\geq1\right\}.
$$
Thus, the effects of overdominant selection are pronounced because the LDP for PD($\theta$) and the LDP for $\pi_{\lambda,\theta}$ have different rate function. It was observed in \cite{MR2519357} that when $\lambda\in(k(k-1),k(k+1)),k\geq1$, the limiting distribution of $\pi_{\lambda,\theta}$ is $\delta_{(\frac{1}{k},\cdots,\frac{1}{k},0,\cdots)}$; but for the critical case $\lambda=k(k+1),k\geq1$, the LDP rate function $S(x)$ has two zero points. Therefore the law of large numbers of $\pi_{\lambda.\theta}$ at critical points remains open. 

 The main result of this paper confirms that the asymptotic distribution at critical values $\lambda=k(k+1)$ is also 
 $$
 \delta_{(\frac{1}{k},\cdots,\frac{1}{k},0,\cdots)}.
 $$
  In general, as $\theta\rightarrow0$, the limiting distribution of $\pi_{\lambda,\theta}$  can be written as  
 $$
 \sum_{k=1}^{\infty}I_{(k(k-1),k(k+1)]}(\lambda)\delta_{(\frac{1}{k},\cdots,\frac{1}{k},0,\cdots)}.
 $$
 Therefore, for different selection intensity, the asymptotic distribution varies, and there are countably many phase transitions. This is indeed a bit of surprising.
 
 The possible explanation of the phase transition can be two fold. On one hand, mathematically, $\varphi_{2}(x)=\sum_{i=1}^{\infty}x_{i}^{2}$ can be regarded as the potential function of the infinitely-many-alleles diffusion with symmetric overdominance. Obviously, $\varphi_{2}(x)$ has a minimum point $(\frac{1}{k},\cdots,\frac{1}{k},0,\cdots)$ in each domain $L_{k},k\geq1$. The graph of $\varphi_{2}(x)$ thus indicates a ``multi-valley energy landscape". For a given selection intensity, the system is trapped in a specific valley with bottom point say $(\frac{1}{k},\cdots,\frac{1}{k},0,\cdots).$ On the other hand, biologically, random sampling constantly deletes a great amount of gene types. Therefore, the system is often likely to stay in $L_{k},k\geq1.$ Once the mutation is present, it will move the system upward along the energy ladder, i.e. the system will gradually move to $L_{k+1}$ from $L_{k},k\geq1.$ The symmetry of the overdominant selection guarantees that the existed types are evenly distributed.  Hence, the three evolutionary forces will balance out a single state such as $(\frac{1}{k},\cdots,\frac{1}{k},0,\cdots)$.

Furthermore, the limiting distribution of homozygosity under $\pi_{\lambda,\theta}$ is also obtained and it is
\begin{equation*}
 \sum_{k=1}^{\infty}I_{(k(k-1),k(k+1)]}(\lambda)\delta_{\frac{1}{k}}. \label{Homozygosity}
 \end{equation*}
 
 All these results are concerned with the infinitely many alleles model. For finitely many alleles model, say two alleles model, the asymptotic distribution of homozygosity is quite similar to the infinitely many alleles model and reads as 
 $$
 I_{(0,2]}(\lambda)\delta_{\frac{1}{2}}+I_{(2,\infty)}(\lambda)\delta_{1}.
 $$
 But the limiting distribution of the stationary distribution for the two alleles model with symmetric overdominance is different since it is a labelled model. Please refer to \cite{MThesis} for the derivation of the limiting distribution for the two alleles model with symmetric overdominance. Presumably, for other finitely many alleles model, the asymptotic homozygosity should behave similarly; but the proof is missing.
  
The whole paper is organized as follows. In section 2,  we will present the main theorem on limiting distribution of homozygosity and its proof. In section 3, proofs of all lemmas will be shown in detail.

\section{Main results}

In this section, we are going to derive the limiting distribution of $\pi_{\lambda,\theta}$ at the critical points $\lambda=k(k+1),k\geq1.$ Due to the LDP estimation of $\pi_{\lambda,\theta}$, we can conclude that the limiting distribution of $\pi_{\lambda,\theta}$, if any, should concentrate on two points $(\frac{1}{k},\cdots,\frac{1}{k},0,\cdots)$ and $(\frac{1}{k+1},\cdots,\frac{1}{k+},0,\cdots)$. But we can not determine the probability weights of these two points using only LDP estimation of $\pi_{\lambda,\theta}$.   

 To this end, we are going to find the limits of homozygosity H$_{2}$ first. Then, making use of the LDP estimation of $\pi_{\lambda,\theta}$, the limits of $\pi_{\lambda,\theta}$ are finally obtained. To obtain the limit of H$_{2}$, we need to estimate its asymptotic moment generating function.  Hence, we expand terms such as the normalized constant $C_{\sigma}$, which is usually called partition function in statistical physics. Since $C_{\sigma}$ is a function of $\theta$, we will expand it near $0$. Thanks to a ratio limit theorem, i.e. Lemma \ref{ML}, the limiting moment generating function of H$_{2}$ will be determined by the leading term in $C_{\sigma}$'s expansion.

\begin{theorem}\label{Homozygosity_LLN}
For $\lambda>0$, the limiting distributions of homozygosity, H$_{2}$, under the distribution $\pi_{\theta,\lambda}$ is 
$$
\sum_{k=1}^{+\infty}{I}_{(k(k-1),k(k+1)]}(\lambda)\delta_{\frac{1}{k}}(dx).
$$
\end{theorem}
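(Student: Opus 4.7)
The plan is to compute the moment generating function of $H_2=\varphi_2(x)$ under $\pi_{\lambda,\theta}$ and show it converges pointwise, as $\theta\to 0^+$, to $e^{t/k}$, where $k$ is the unique integer with $\lambda\in(k(k-1),k(k+1)]$; Lévy continuity then yields $H_2\Rightarrow\delta_{1/k}$. Writing $\sigma=\lambda\log\theta$, the definition of $\pi_{\lambda,\theta}$ gives
$$
E_{\pi_{\lambda,\theta}}[e^{tH_{2}}]=\frac{E_{\mathrm{PD}(\theta)}[e^{(\sigma+t)\varphi_2}]}{E_{\mathrm{PD}(\theta)}[e^{\sigma\varphi_2}]},
$$
so the entire argument reduces to extracting matching leading asymptotics of two $\mathrm{PD}(\theta)$-partition functions of the form $Z(\alpha,\theta):=E_{\mathrm{PD}(\theta)}[e^{\alpha(\log\theta)\varphi_2}]$ for $\alpha>0$.

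Next I would stratify the simplex around the sets $L_n$, $n\geq 1$, and expand each integral using the LDP for $\mathrm{PD}(\theta)$ (speed $\log 1/\theta$, rate $J$) together with a Laplace-type refinement around the unique minimizer $(1/n,\ldots,1/n,0,\ldots)$ of $\varphi_2$ on $L_n$. The bulk of $\mathrm{PD}(\theta)$ near $L_n$ carries weight of order $\theta^{n-1}$, and evaluating the exponential at the minimum gives $e^{\sigma/n}=\theta^{\lambda/n}$, so the leading $\theta$-power of the $L_n$-contribution to the denominator is $\theta^{n-1+\lambda/n}$. Since $f(n):=n-1+\lambda/n$ is uniquely minimized at the integer $n=k$ for every $\lambda\in(k(k-1),k(k+1))$, the denominator is dominated by a single stratum in the non-critical range. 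At the critical endpoint $\lambda=k(k+1)$ one checks $f(k)=f(k+1)=2k$, so both $L_k$ and $L_{k+1}$ produce the same $\theta^{2k}$ power.

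The tie at the critical value must then be broken by the Gaussian width of the Laplace integral. Because the exponential penalty concentrates at scale $|\sigma|^{-1/2}=(\lambda\log 1/\theta)^{-1/2}$ while $L_n$ has dimension $n-1$, each stratum carries an extra prefactor of order $(\log 1/\theta)^{-(n-1)/2}$. Consequently, at $\lambda=k(k+1)$ the $L_k$-contribution dominates the $L_{k+1}$-contribution by a factor of $(\log 1/\theta)^{1/2}\to\infty$, which is the decisive observation that picks out $n=k$ over $n=k+1$ in the limit. Applying this expansion to both numerator and denominator (which differ only by a bounded prefactor $e^{t/n}$ on each stratum) and invoking the ratio limit Lemma \ref{ML}, one obtains
$$
\lim_{\theta\to0^+}E_{\pi_{\lambda,\theta}}[e^{tH_2}]=e^{t/k}
$$
for every $\lambda\in(k(k-1),k(k+1)]$, and the union of these intervals covers $(0,\infty)$.

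The main technical obstacle is a rigorous justification of this stratified Laplace expansion with the correct log prefactor. The $\mathrm{PD}(\theta)$ measure does not admit an ordinary density on any fixed $L_n$, so one cannot quote a textbook multidimensional Laplace formula; instead one must combine a sharp description of how $\mathrm{PD}(\theta)$ concentrates on $L_n$ (e.g.\ via its size-biased/stick-breaking representation) with the Gaussian localization imposed by $e^{\sigma\varphi_2}$, and show that the cumulative contributions from all strata $L_n$ with $n\geq k+2$ remain negligible against $\theta^{2k}(\log 1/\theta)^{-(k-1)/2}$ even after accounting for the log corrections. This is precisely the setting in which Lemma \ref{ML} is designed to operate, by reducing the ratio of two such series expansions to the ratio of their leading terms.
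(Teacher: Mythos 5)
Your strategic skeleton is sound, and your heuristic identifies exactly the right quantitative mechanism: the $L_{n}$-stratum contributes at order $\theta^{\,n-1+\lambda/n}\,(\log\frac{1}{\theta})^{-(n-1)/2}$ after normalization, the exponent $n-1+\lambda/n$ is uniquely minimized at $n=k$ for $\lambda\in(k(k-1),k(k+1))$ and ties at $n=k,k+1$ when $\lambda=k(k+1)$, and the tie is broken by the extra factor $(\log\frac{1}{\theta})^{-1/2}$ carried by the higher-dimensional stratum. This is precisely what the paper's analysis ultimately establishes by other means: in Lemma \ref{COEF2} the coefficient $C_{k,l}$ carries a $k^{-l/2}$ prefactor (with $l$ playing the role of your $n-1$), which after summation against $\frac{(\lambda\log\frac{1}{\theta})^{k}}{k!}$ becomes the $(\log\frac{1}{\theta})^{-(n-1)/2}$ you predict, and Case 3 of the proof of Lemma \ref{COEF}, where $C_{k,u}/C_{k,u-1}\sim\mathrm{const}\cdot k^{-1/2}\rightarrow0$, is exactly your tie-breaking step.

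Nevertheless the proposal has a genuine gap at its decisive step, and you flag it yourself: the ``stratified Laplace expansion with the correct log prefactor'' is asserted, not proved. Since $\mathrm{PD}(\theta)$ has no density on any $L_{n}$ and its concentration near $L_{n}$ at rate $\theta^{n-1}$ is known only at LDP (logarithmic) precision, the LDP cannot produce constant-order prefactors, let alone the $(\log\frac{1}{\theta})^{-(n-1)/2}$ corrections that alone decide the critical case $\lambda=k(k+1)$; one would need a sharp local limit theorem for $\mathrm{PD}(\theta)$ near each stratum, which is not available off the shelf and is not supplied. The paper circumvents this entirely and never localizes on the simplex: it uses the stick-breaking (GEM) representation to derive an exact recursion for the moments $m_{j}=E(1-H_{2})^{j}$, obtains the exact expansion $m_{j}=\sum_{l=1}^{j}A_{j,l}(\theta)\theta^{l}$ (Lemma \ref{MH}), and then extracts the coefficient asymptotics by elementary means --- Stirling's formula together with LDP and CLT estimates for binomial and negative binomial sums (Lemmas \ref{UBMH} through \ref{COEF}) --- before feeding the result into the ratio limit Lemma \ref{ML}. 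So while your outline predicts the correct answer and correctly isolates where the difficulty sits, it does not constitute a proof without a substitute for this moment machinery (or a rigorous local analysis of $\mathrm{PD}(\theta)$ near the strata) that actually delivers the logarithmic prefactor.
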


 Before we present the proof, we need the following lemmas, the proofs of which are postponed.
 
 \begin{lemma}\label{MH}
 The moment of the heterozygosity $m_{k}=E(1-H_{2})^{k}$ has the form
 $$
 m_{k}=\sum_{l=1}^{k}A_{k,l}(\theta)\theta^{l},
 $$
 where \begin{align*}
 A_{k,1}(\theta)&=\frac{2k+\theta}{2k}\frac{2^{k}k!\Gamma(k+\theta)}{\Gamma(2k+1+\theta)}\\
 A_{k,p}(\theta)&=\sum_{l=p-1}^{k-1}\frac{2k+\theta}{2k}\frac{2^{k}k!}{2^{l}l!}\frac{\Gamma(k+l+\theta)}{\Gamma(2k+1+\theta)}A_{l,p-1}(\theta), \quad p\geq 2.
 \end{align*}  
 \end{lemma}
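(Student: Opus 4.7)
The plan is to exploit the size-biased (GEM) representation of PD($\theta$) in order to obtain a distributional identity for $H_2$ that collapses the PD expectation to a single Beta integral, and then to verify the stated closed form by induction on $k$.

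Under the stick-breaking construction $P_i = V_i\prod_{j<i}(1-V_j)$ with $V_j$ i.i.d.\ $\mathrm{Beta}(1,\theta)$, the sequence $(P_1,P_2,\ldots)$ is a size-biased permutation of a PD($\theta$) sample; since $\varphi_2$ is symmetric in its arguments, $H_2 = \sum_{i}P_i^{2}$, and peeling off the first stick gives
\[
H_2 \stackrel{d}{=} V_1^{2} + (1-V_1)^{2}\, H_2', \qquad H_2' \stackrel{d}{=} H_2,\quad V_1 \perp H_2'.
\]
A short algebraic rearrangement yields
\[
1 - H_2 = (1-V_1)\bigl[\,2V_1 + (1-V_1)(1-H_2')\,\bigr].
\]

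Raising both sides to the $k$-th power, expanding by the binomial theorem, and taking expectations (using $V_1 \perp H_2'$ and $H_2' \stackrel{d}{=} H_2$), I get the linear relation
\[
m_k \;=\; \sum_{j=0}^{k}\binom{k}{j}\,2^{k-j}\, E\bigl[V_1^{k-j}(1-V_1)^{k+j}\bigr]\,m_j.
\]
The Beta moments $E[V_1^{a}(1-V_1)^{b}] = \theta\,a!\,\Gamma(b+\theta)/\Gamma(a+b+1+\theta)$ applied with $a=k-j$, $b=k+j$ show that the $j=k$ term on the right contributes $\tfrac{\theta}{2k+\theta}\cdot m_k$. Moving it to the left-hand side and simplifying produces the closed recursion
\[
m_k = \theta\cdot\frac{2k+\theta}{2k}\cdot\frac{2^{k}k!}{\Gamma(2k+1+\theta)} \sum_{j=0}^{k-1} \frac{\Gamma(k+j+\theta)}{2^{j}\,j!}\,m_j, \qquad m_0 = 1.
\]

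The lemma then follows by induction on $k$. The $j=0$ summand gives precisely $A_{k,1}(\theta)\,\theta$. For $j\geq 1$, I substitute the inductive hypothesis $m_j = \sum_{p=1}^{j}A_{j,p}(\theta)\,\theta^{p}$, interchange the two summations, and shift the index $p\mapsto p-1$; the coefficient of $\theta^{p}$ ($p\geq 2$) that emerges matches exactly the formula given for $A_{k,p}(\theta)$. The only place where real care is needed is this final re-indexing of the double sum, so that is the step I expect to write out most carefully; conceptually everything rests on the size-biased decomposition, which reduces the infinite-dimensional PD expectation to a one-dimensional Beta integral.
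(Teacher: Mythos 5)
Your proposal is correct and follows essentially the same route as the paper: the one-step stick-breaking peel $1-H_2 = 2V_1(1-V_1) + (1-V_1)^2(1-H_2')$ with $V_1\perp H_2'$ and $H_2'\stackrel{d}{=}H_2$, the binomial expansion with Beta$(1,\theta)$ moments, isolation of the $j=k$ term to get the closed recursion, and induction with the index shift $p\mapsto p-1$. The only difference is cosmetic (you state the peel as a distributional identity and factor out $(1-V_1)$ before expanding, while the paper expands the unfactored form), so nothing further is needed.
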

 
 Let us define $A_{k,p}=A_{k,p}(0)$; then 
 $$
 A_{k,1}=\frac{2^{k}k!(k-1)!}{(2k)!},
 $$
 and
 $$
 A_{k,p}=\sum_{l=p-1}^{k-1}\frac{2^{k}k!\Gamma(k+l)}{2^{l}l!\Gamma(2k+1)}A_{l,p-1},  \quad p\geq2.
 $$
 Thus $A_{k,p}$ does not depend on $\theta$ anymore; but it is an appropriate approximation of $A_{k,p}(\theta)$, as can be seen in the proof of Lemma \ref{COEF}.  
  \begin{lemma}\label{UBMH}
 If we fix integer $p\geq1$, then, $\forall \theta\in[0,1]$, we have 
\begin{align*}
& \frac{1}{2^{p}}A_{k,p}\leq A_{k,p}(\theta)\leq A_{k,p}\leq \frac{1}{2^{p-2}}, &\forall k\geq p\geq1;\\
&|A_{k,p}(\theta)-A_{k,p}|\leq \theta p A_{k,p}, &1\leq p\leq k.
 \end{align*}
 \end{lemma}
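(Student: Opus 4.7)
The plan is to run a joint induction on $p$ after first making the $\theta$-dependence transparent. Writing the Gamma ratios in the recursions as falling factorial products and observing that the prefactor $(2k+\theta)/(2k)$ cancels the $j=2k$ term exactly, both recursions take the parallel form
$$A_{k,p}(\theta) = \sum_{l=p-1}^{k-1} T_{k,l}\, P_{k,l}(\theta)\, A_{l,p-1}(\theta), \qquad A_{k,p} = \sum_{l=p-1}^{k-1} T_{k,l}\, A_{l,p-1},$$
where $T_{k,l}=\frac{2^{k}k!\,\Gamma(k+l)}{2^{l}l!\,\Gamma(2k+1)}$ is the common $\theta=0$ coefficient and the ``pollution factor'' is $P_{k,l}(\theta)=\prod_{j=k+l}^{2k-1}\frac{j}{j+\theta}$. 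For $p=1$ this collapses to $A_{k,1}(\theta)=A_{k,1}\,P_{k,0}(\theta)$, so the telescoping structure is visible at once.

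For the first chain of inequalities, each factor $j/(j+\theta)$ of $P_{k,l}(\theta)$ lies in $[j/(j+1),1]$ when $\theta\in[0,1]$, which yields $P_{k,l}(\theta)\leq 1$ and, by telescoping the lower bound, $P_{k,l}(\theta)\geq (k+l)/(2k)\geq 1/2$. Feeding this into the recursion together with the inductive hypothesis $A_{l,p-1}/2^{p-1}\leq A_{l,p-1}(\theta)\leq A_{l,p-1}$ produces $A_{k,p}/2^{p}\leq A_{k,p}(\theta)\leq A_{k,p}$ term by term. For the difference bound I would use $|1-P_{k,l}(\theta)|\leq \sum_{j=k+l}^{2k-1}\theta/(j+\theta)\leq \theta$, where the harmonic tail $\sum_{j=k}^{2k-1}1/j$ is easily seen to be monotone decreasing in $k$ with maximum value $1$ at $k=1$ and limit $\log 2$. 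Decomposing
$$A_{k,p}(\theta)-A_{k,p}=\sum_{l=p-1}^{k-1} T_{k,l}\Bigl[(P_{k,l}(\theta)-1)A_{l,p-1}+P_{k,l}(\theta)(A_{l,p-1}(\theta)-A_{l,p-1})\Bigr]$$
and applying the inductive bound $|A_{l,p-1}(\theta)-A_{l,p-1}|\leq \theta(p-1)A_{l,p-1}$ together with $P_{k,l}(\theta)\leq 1$ accumulates the error as $(\theta+\theta(p-1))\sum_{l}T_{k,l}A_{l,p-1}=\theta p\,A_{k,p}$.

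The main obstacle is the absolute upper bound $A_{k,p}\leq 1/2^{p-2}$. The base case $p=1$ is free since the ratio $A_{k,1}/A_{k-1,1}=(k-1)/(2k-1)\leq 1/2$ shows $A_{k,1}\leq A_{1,1}=1\leq 2$. For $p\geq 2$ the inductive step reduces to the combinatorial inequality $\sum_{l=p-1}^{k-1}T_{k,l}\leq 1/2$, and by monotonicity in $p$ it suffices to treat $p=2$. Rewriting $T_{k,l}=\frac{2^{k-l}\binom{k+l-1}{l}}{k\binom{2k}{k}}$ and invoking the identity $\sum_{l=0}^{k-1}\binom{k+l-1}{l}2^{-l}=2^{k-1}$ (equivalent to the $k$-th head in fair coin tossing having median $2k$) gives the closed form $\sum_{l=0}^{k-1}T_{k,l}=\frac{2^{2k-1}}{k\binom{2k}{k}}$, so $\sum_{l=1}^{k-1}T_{k,l}=\frac{2^{k}(2^{k-1}-1)}{k\binom{2k}{k}}$; the required bound $2^{2k}-2^{k+1}\leq k\binom{2k}{k}$ then follows for small $k$ by inspection and for all larger $k$ from a Stirling-type lower bound on the central binomial coefficient. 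Once this combinatorial step is secured, the remainder of the argument is a routine unwinding of the telescoping structure above.
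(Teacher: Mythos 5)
Your argument is correct, and for the two $\theta$-comparison inequalities it is essentially the paper's proof in a cleaner dress: the paper also inducts on $p$ through the recursion, bounding the ratio of Gamma factors between $\frac{k+l}{2k}\geq\frac12$ and $1$ (your $P_{k,l}(\theta)$), and for the difference bound it uses the same decomposition into $(P-1)A+P(A(\theta)-A)$; the only cosmetic difference is that the paper controls $|1-P_{k,l}(\theta)|\leq\theta$ by expanding the product into an alternating sum and comparing with the telescoping product $\prod_j(1+1/j)-1=\frac{k-l}{k+l}$, where you use the subadditivity bound $1-\prod(1-x_j)\leq\sum x_j$ plus the monotone harmonic tail $\sum_{j=k}^{2k-1}1/j\leq 1$. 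Where you genuinely diverge is the absolute bound $A_{k,p}\leq 2^{2-p}$, which both arguments reduce to $\sum_{l}T_{k,l}\leq\frac12$. The paper gets this cheaply: it shows $T_{k,l}$ is unimodal in $l$ with maximum value $\frac{1}{2k-1}$ (attained at $l=k-2$ and $l=k-1$), so the sum of the $k-p$ terms is at most $\frac{k-p}{2k-1}<\frac12$. You instead evaluate the full sum exactly via $T_{k,l}=\frac{2^{k-l}\binom{k+l-1}{l}}{k\binom{2k}{k}}$ and the negative-binomial median identity $\sum_{l=0}^{k-1}\binom{k+l-1}{l}2^{-l}=2^{k-1}$, reducing the claim to $2^{2k}-2^{k+1}\leq k\binom{2k}{k}$, checked by inspection for small $k$ and by $\binom{2k}{k}\geq 4^k/(2\sqrt{k})$ in general. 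Your route costs an identity and a Stirling-type estimate but buys the sharp asymptotic $\sum_{l}T_{k,l}\sim\frac12\sqrt{\pi/k}\to 0$, which is much stronger than the paper's bound (which only tends to $\frac12$ from below); the paper's max-times-count argument is the more elementary of the two and is all that the lemma requires. Both are complete proofs.
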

 
\begin{lemma} \label{FT}
For $\lambda>0$, we have
 $$
 \lim_{\theta\rightarrow0}\sum_{l=[\lambda]+1}^{\infty}\theta^{l}\sum_{k=l}^{\infty}\frac{(\lambda\log\frac{1}{\theta})^{k}}{k!}A_{k,l}(\theta)=0.
 $$
 and
 $$
  \lim_{\theta\rightarrow0}\sum_{l=[\lambda]+1}^{\infty}\theta^{l}\sum_{k=l}^{\infty}\frac{(\lambda\log\frac{1}{\theta})^{k}}{k!}A_{k+n,l}(\theta)=0.
 $$
\end{lemma}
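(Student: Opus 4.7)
The plan is to reduce both limits to the same geometric-series estimate using the uniform upper bound on $A_{k,p}(\theta)$ provided by Lemma \ref{UBMH}. The key observation is that the summand is bounded by a quantity depending on $l$ but not on $k$, so the inner sum over $k$ collapses to a truncated exponential series, which one then bounds by $\theta^{-\lambda}$; the outer sum in $l$ is then geometric in $\theta/2$, and the $\theta^{[\lambda]+1}$ that comes out front beats the blow-up $\theta^{-\lambda}$.

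More concretely, first I would apply Lemma \ref{UBMH} to get $A_{k,l}(\theta)\leq A_{k,l}\leq 2^{-(l-2)}$ for every $k\geq l\geq 1$ and every $\theta\in[0,1]$. This yields
\begin{equation*}
\sum_{k=l}^{\infty}\frac{(\lambda\log\tfrac{1}{\theta})^{k}}{k!}A_{k,l}(\theta)\;\leq\;\frac{1}{2^{l-2}}\sum_{k=0}^{\infty}\frac{(\lambda\log\tfrac{1}{\theta})^{k}}{k!}\;=\;\frac{4}{2^{l}}\,\theta^{-\lambda}.
\end{equation*}
Multiplying by $\theta^{l}$ and summing the resulting geometric series over $l\geq [\lambda]+1$ gives
\begin{equation*}
\sum_{l=[\lambda]+1}^{\infty}\theta^{l}\sum_{k=l}^{\infty}\frac{(\lambda\log\tfrac{1}{\theta})^{k}}{k!}A_{k,l}(\theta)\;\leq\;4\theta^{-\lambda}\sum_{l=[\lambda]+1}^{\infty}\Bigl(\frac{\theta}{2}\Bigr)^{l}\;=\;\frac{4\,(\theta/2)^{[\lambda]+1}}{1-\theta/2}\,\theta^{-\lambda},
\end{equation*}
which is of order $\theta^{[\lambda]+1-\lambda}$. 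Since $[\lambda]+1-\lambda>0$ always, this tends to $0$ as $\theta\to 0$, proving the first limit.

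For the second limit, the same estimate applies verbatim once I observe that $A_{k+n,l}(\theta)\leq A_{k+n,l}\leq 2^{-(l-2)}$ as well: since $k\geq l$ implies $k+n\geq l$, the Lemma \ref{UBMH} inequality is still available with the same $l$-dependent bound that is independent of $k$ and $n$. Substituting this bound in place of $A_{k,l}(\theta)$ gives exactly the same geometric estimate, so the conclusion follows identically.

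There is no serious obstacle; the only mild care needed is the fact that $[\lambda]+1>\lambda$ for every real $\lambda>0$ (including integer $\lambda$), which is what makes the product $\theta^{-\lambda}\cdot\theta^{[\lambda]+1}$ a positive power of $\theta$ and hence vanish in the limit. The whole argument is essentially a one-line application of Lemma \ref{UBMH} followed by swapping the order of summation and recognising an exponential series.
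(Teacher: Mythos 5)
Your proposal is correct and follows essentially the same route as the paper: bound $A_{k,l}(\theta)$ by $2^{-(l-2)}$ via Lemma \ref{UBMH}, dominate the inner sum by the full exponential series $e^{\lambda\log\frac{1}{\theta}}=\theta^{-\lambda}$, sum the resulting geometric series in $l$, and conclude from $[\lambda]+1>\lambda$. The treatment of the second limit by the same $k$-independent bound is also exactly what the paper does.
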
 

 \begin{lemma}\label{ML}
 Let $a_{n}, b_{n}$ be two positive sequences. Suppose that $\lim_{n\rightarrow\infty}\frac{a_{n}}{b_{n}}=c$ and $\sum_{n=0}^{\infty}a_{n}\frac{x^{n}}{n!}$ and $\sum_{n=0}^{\infty}b_{n}\frac{x^{n}}{n!}$ are both convergent in $\mathbb{R}$. Then $\lim_{x\rightarrow+\infty}\frac{\sum_{n=0}^{\infty}a_{n}\frac{x^{n}}{n!}}{\sum_{n=0}^{\infty}b_{n}\frac{x^{n}}{n!}}=c.$
 \end{lemma}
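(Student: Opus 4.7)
The plan is a straightforward $\varepsilon$--$N$ argument in the spirit of the dominated convergence / Abelian theorems for power series. I would start by fixing $\varepsilon>0$ and choosing $N$ so large that $|a_n-cb_n|<\varepsilon b_n$ for all $n\geq N$, which is possible because $a_n/b_n\to c$ and $b_n>0$. Then I would split each series at index $N$ as
$$
\sum_{n=0}^{\infty}a_n\frac{x^n}{n!}=P(x)+\sum_{n\geq N}a_n\frac{x^n}{n!},\qquad
B(x):=\sum_{n=0}^{\infty}b_n\frac{x^n}{n!}=Q(x)+\sum_{n\geq N}b_n\frac{x^n}{n!},
$$
where $P,Q$ are polynomials of degree at most $N-1$.

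Next, I would estimate the tail by the triangle inequality: on indices $n\geq N$, positivity of $b_n$ combined with $|a_n-cb_n|<\varepsilon b_n$ yields
$$
\left|\sum_{n\geq N}(a_n-cb_n)\frac{x^n}{n!}\right|\leq \varepsilon\sum_{n\geq N}b_n\frac{x^n}{n!}\leq \varepsilon B(x).
$$
Dividing through by $B(x)$ gives
$$
\left|\frac{\sum_{n=0}^{\infty}a_n x^n/n!}{B(x)}-c\right|\leq \frac{|P(x)|+|c|\,|Q(x)|}{B(x)}+\varepsilon.
$$
To finish, I would show the first term on the right vanishes as $x\to+\infty$. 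Since every $b_n>0$, picking any fixed $M>N$ gives the pointwise lower bound $B(x)\geq b_M x^M/M!$, which grows faster than the polynomial $|P(x)|+|c|\,|Q(x)|$ of degree at most $N-1$. Hence $(|P(x)|+|c|\,|Q(x)|)/B(x)\to 0$, so $\limsup_{x\to\infty}\bigl|\sum a_n x^n/n!\bigr/B(x)-c\bigr|\leq \varepsilon$, and letting $\varepsilon\downarrow 0$ concludes the proof.

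There is not really a serious obstacle here; the only point needing care is the uniform growth of the denominator $B(x)$ compared to any fixed polynomial, and this is exactly where the hypothesis that $(b_n)$ is positive (and implicitly has infinitely many nonzero terms, else the claim is trivial) enters. Everything else is bookkeeping: the convergence of both power series on all of $\mathbb{R}$ guarantees that the split into a polynomial head and a convergent tail is legitimate, and no exchange of limit and summation beyond elementary comparison is required.
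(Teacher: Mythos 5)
Your proof is correct. Note that the paper itself never proves Lemma \ref{ML}: it is stated as a known ratio limit theorem and its proof is omitted from Section 3, so there is no argument of the author's to compare yours against. Your head/tail splitting at the index $N$ where $|a_n-cb_n|<\varepsilon b_n$, combined with the observation that $B(x)\geq b_Mx^M/M!$ for any fixed $M\geq N$ (using strict positivity of every $b_n$ and $x>0$, which also makes the tail terms and $Q(x)$ nonnegative so that $\sum_{n\geq N}b_nx^n/n!\leq B(x)$), is the standard Abelian-type argument and is complete; it in fact only uses that both series converge on $(0,\infty)$, not on all of $\mathbb{R}$.
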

 
 \begin{lemma}\label{COEF1}
 For any fixed integer $p\geq1$,  we have, as $k\rightarrow+\infty$,
  $$
 A_{k,p}\sim C_{p}\frac{1}{k^{\frac{p}{2}}}(\frac{p}{p+1})^{k},
 $$
 where $C_{1}=\sqrt{\pi}$, and $C_{p+1}=C_{p}\sqrt{\pi}(\frac{p+2}{p})^{\frac{p}{2}}$.
 \end{lemma}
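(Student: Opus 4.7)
The plan is to prove the claim by induction on $p$. For the base case $p=1$, the closed form $A_{k,1}=2^{k}k!(k-1)!/(2k)!$ yields $A_{k,1}\sim\sqrt{\pi}\,k^{-1/2}(1/2)^{k}$ by a direct application of Stirling's formula, confirming $C_{1}=\sqrt{\pi}$.

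For the inductive step, assuming $A_{l,p-1}\sim C_{p-1}\,l^{-(p-1)/2}\bigl((p-1)/p\bigr)^{l}$, I would substitute into the recursion
\[
A_{k,p}=\sum_{l=p-1}^{k-1}\frac{2^{k}k!\,\Gamma(k+l)}{2^{l}l!\,\Gamma(2k+1)}A_{l,p-1}
\]
and analyze it by Laplace's method in the scaled variable $\alpha=l/k\in(0,1)$. Stirling expansion of $k!$, $l!$, $\Gamma(k+l)$ and $\Gamma(2k+1)$ recasts the summand as $h(\alpha)\,k^{-(p-1)/2}\exp\bigl(k\,g(\alpha)\bigr)$, where
\[
g(\alpha)=-(1+\alpha)\log 2+(1+\alpha)\log(1+\alpha)-\alpha\log\alpha+\alpha\log\tfrac{p-1}{p}.
\]
The stationary equation $g'(\alpha)=0$ has the unique interior solution $\alpha^{*}=(p-1)/(p+1)$, at which $g(\alpha^{*})=\log\bigl(p/(p+1)\bigr)$, reproducing the claimed exponential rate. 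Computing $g''(\alpha^{*})=-(p+1)^{2}/[2p(p-1)]$ and combining the Stirling prefactors, the factor $(\alpha^{*})^{-(p-1)/2}$ coming from the inductive hypothesis, and the Gaussian width $\sqrt{2\pi/(k|g''(\alpha^{*})|)}$ yields
\[
A_{k,p}\sim C_{p-1}\sqrt{\pi}\,\bigl((p+1)/(p-1)\bigr)^{(p-1)/2}\,k^{-p/2}\bigl(p/(p+1)\bigr)^{k},
\]
which matches the stated recursion $C_{p+1}=C_{p}\sqrt{\pi}\,((p+2)/p)^{p/2}$ after relabelling $p\mapsto p-1$.

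The main obstacle I anticipate is the rigorous justification of the Laplace estimate for the \emph{discrete} sum rather than its formal saddle-point computation. I would split the range $p-1\leq l\leq k-1$ into a central window $|l-\alpha^{*}k|\leq k^{2/3}$, on which a uniform Taylor expansion of $g$ together with the inductive asymptotic for $A_{l,p-1}$ delivers the Gaussian approximation, and two tails that must be shown negligible. For the left tail $\alpha\to 0^{+}$, where the inductive hypothesis is weakest for small $l$, I would use the uniform bound $A_{l,p-1}\leq 1/2^{p-3}$ from Lemma \ref{UBMH} together with $\lim_{\alpha\to 0^{+}}g(\alpha)=-\log 2<\log(p/(p+1))=g(\alpha^{*})$ (which holds for $p\geq 2$) to dominate this contribution by a geometric series of order $2^{-k}$. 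For the right tail $\alpha\to 1^{-}$, the inequality $g(1)=\log((p-1)/p)<g(\alpha^{*})$ together with a crude Stirling bound shows that region is also exponentially subdominant. These two tail estimates, combined with the central Gaussian contribution, close the induction and establish both the exponential rate $(p/(p+1))^{k}$, the polynomial correction $k^{-p/2}$, and the correct constant $C_{p}$.
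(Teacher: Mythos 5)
Your proposal is correct, and the final constants check out: the saddle point $\alpha^{*}=(p-1)/(p+1)$, the value $g(\alpha^{*})=\log\frac{p}{p+1}$, the second derivative $g''(\alpha^{*})=-\frac{(p+1)^{2}}{2p(p-1)}$, and the resulting recursion $C_{p}=C_{p-1}\sqrt{\pi}\bigl(\tfrac{p+1}{p-1}\bigr)^{\frac{p-1}{2}}$ all agree with the paper. The route, however, is genuinely different in execution. The paper also inducts on $p$ and also isolates a fixed initial segment $l\leq M$ (handled, as you propose, by the uniform bound of Lemma \ref{UBMH}), but instead of running Laplace's method by hand it observes that after factoring out $(\tfrac{p}{p+1})^{l}$ the weights $a_{k}(l)=\frac{2^{k}k!\Gamma(k+l)}{2^{l}l!\Gamma(2k+1)}(\tfrac{p}{p+1})^{l}$ are, up to an explicit constant, the probability mass function of a negative binomial $NB(k,\alpha)$ with $\alpha=\frac{p+2}{2(p+1)}$; the tails are then killed by the large deviation principle of Example \ref{LDP_EXAM}, the central window contributes $l_{0}^{-p/2}$ with $l_{0}=\frac{(1-\alpha)k}{\alpha}$ by the central limit theorem, and the Gaussian normalization you have to compute from $g''$ comes for free from $\sum_{l}P(X_{\alpha}^{k}=l)=1$, leaving a single Stirling evaluation at the end. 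Your saddle point is exactly the negative binomial mean after the index shift $p\mapsto p-1$. What the paper's version buys is brevity and reuse of Example \ref{LDP_EXAM}; what yours buys is self-containedness (no probabilistic reformulation needed) and transparency about where the extra $k^{-1/2}$ per induction step originates, at the cost of the heavier asymptotic bookkeeping you correctly flag — in particular you should be careful that the Stirling prefactor $\bigl(2l(k+l)\bigr)^{-1/2}$ is itself of order $k^{-1}$, so that together with the window width of order $\sqrt{k}$ it produces the net $k^{-1/2}$; your stated final asymptotic shows you have accounted for this correctly.
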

 
 \begin{lemma}\label{COEF2}
 Define $C_{k,l}=\sum_{s=0}^{k-l}\binom{k}{s}(\frac{\lambda-l}{\lambda})^{l}A_{k-s,l}$. Then, as $k\rightarrow+\infty$,
  $$
 C_{k,l}\sim C_{l}\left(1+\frac{(\lambda-l)(l+1)}{\lambda l}\right)^{\frac{l}{2}}\frac{1}{k^\frac{l}{2}}\left(\frac{\lambda-l}{\lambda}+\frac{l}{l+1}\right)^{k}.
 $$
 \end{lemma}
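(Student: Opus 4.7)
The plan is to set $a=\frac{\lambda-l}{\lambda}$ and $b=\frac{l}{l+1}$ and (reading the summand as $\binom{k}{s}a^{s}A_{k-s,l}$, which is the only interpretation consistent with the claimed base of the exponential) rewrite $C_{k,l}$ as a binomial expectation:
$$
C_{k,l}=(a+b)^{k}\,E\!\left[\mathbf{1}_{\{S\le k-l\}}\,\frac{A_{k-S,l}}{b^{k-S}}\right],\qquad S\sim\mathrm{Binomial}(k,p),\ p=\tfrac{a}{a+b}\in(0,1).
$$
By Lemma \ref{COEF1}, $A_{m,l}/b^{m}\sim C_{l}m^{-l/2}$, and by the weak law of large numbers $S/k\to p$; so on the event that $k-S$ is large we morally have $A_{k-S,l}/b^{k-S}\approx C_{l}k^{-l/2}(1-p)^{-l/2}$, which would give the claimed asymptotic after rewriting $(1-p)^{-l/2}=(1+a/b)^{l/2}=\bigl(1+\tfrac{(\lambda-l)(l+1)}{\lambda l}\bigr)^{l/2}$. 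The task is therefore to make this heuristic rigorous by handling both the boundary region where $k-S$ is small and the tail region where $S/k$ is far from $p$.

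First I would bound the boundary contribution $\{k-S<M\}$, for a large but fixed integer $M$, using the uniform estimate $A_{k-S,l}\le 2^{2-l}$ from Lemma \ref{UBMH}. Together with the Chernoff bound $P(S>k-M)\le e^{-ck}$ (valid for large $k$ since $p<1$), this region contributes at most a constant multiple of $(a+b)^{k}e^{-ck}=o((a+b)^{k}k^{-l/2})$, which is negligible for the claim.

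Next, on the bulk region $\{k-S\ge M\}$, Lemma \ref{COEF1} gives $A_{k-S,l}/b^{k-S}=C_{l}(k-S)^{-l/2}(1+\eta_{k-S})$ with $|\eta_{k-S}|<\varepsilon$ once $M$ is large. A second decomposition into $\{|S/k-p|<\delta\}$ and its complement controls the remaining issue: on the concentrated event the integrand $(1-S/k)^{-l/2}$ is pinched between $(1-p+\delta)^{-l/2}$ and $(1-p-\delta)^{-l/2}$ while the event has probability tending to $1$; on the complement Hoeffding together with the crude bound $(1-S/k)^{-l/2}\le(k/M)^{l/2}$ again produces an exponentially small contribution. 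Letting $\varepsilon,\delta\to 0$ after $k\to\infty$ yields
$$
k^{l/2}\,E\!\left[\mathbf{1}_{\{k-S\ge M\}}\,\frac{A_{k-S,l}}{b^{k-S}}\right]\longrightarrow C_{l}(1-p)^{-l/2},
$$
and combining with the boundary estimate completes the proof.

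The main obstacle is precisely the boundary regime $k-s\in\{l,l+1,\dots,M\}$, where Lemma \ref{COEF1}'s asymptotic is not directly available; Lemma \ref{UBMH} supplies exactly the uniform upper bound needed to absorb these terms into the exponentially small binomial-tail estimate. Everything else is a standard Laplace-type concentration argument applied to a binomial with explicit parameter $p=a/(a+b)$.
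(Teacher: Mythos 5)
Your proposal is correct and follows essentially the same route as the paper: both rewrite $C_{k,l}$ (with exponent $s$ on $\frac{\lambda-l}{\lambda}$, as you correctly infer) as $(a+b)^{k}$ times an expectation over a $\mathrm{Binomial}(k,\beta)$ variable with $\beta=a/(a+b)$, dispose of the boundary terms $k-s\le M$ using the uniform bound of Lemma \ref{UBMH} together with an exponentially small tail probability, and evaluate the bulk by concentration of $S$ around $\beta k$ combined with the asymptotic of Lemma \ref{COEF1}, yielding the factor $(1-\beta)^{-l/2}=\bigl(1+\tfrac{(\lambda-l)(l+1)}{\lambda l}\bigr)^{l/2}$. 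The only cosmetic difference is that the paper first substitutes the asymptotic of $A_{k-s,l}$ to form an auxiliary sum $\Sigma_{2}$ and invokes the binomial LDP of Example \ref{LDP_EXAM} plus the central limit theorem, whereas you fold the two approximations into one step and quote Hoeffding.
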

 
 \begin{lemma}\label{COEF}
 For $\lambda>2$, define 
 $$
 K_{n}^{\lambda}(\theta)=\frac{\sum_{l=1}^{[\lambda]}\theta^{l}\sum_{k=l}^{\infty}\frac{(\lambda \log\frac{1}{\theta})^{k}}{k!}A_{k+n,l}(\theta)}{\sum_{l=1}^{[\lambda]}\theta^{l}\sum_{k=l}^{\infty}\frac{(\lambda \log\frac{1}{\theta})^{k}}{k!}A_{k,l}(\theta)}
 $$
 and 
 $$
 \tilde{K}_{n}^{\lambda}(\theta)=\frac{\sum_{l=1}^{[\lambda]}\theta^{l}\sum_{k=l}^{\infty}\frac{(\lambda \log\frac{1}{\theta})^{k}}{k!}A_{k+n,l}}{\sum_{l=1}^{[\lambda]}\theta^{l}\sum_{k=l}^{\infty}\frac{(\lambda \log\frac{1}{\theta})^{k}}{k!}A_{k,l}}.
 $$
When $u(u-1)<\lambda\leq u(u+1),\quad u\geq2$, we have
 $$
 \lim_{\theta\rightarrow0} K_{n}^{\lambda}(\theta)=\lim_{\theta\rightarrow0} \tilde{K}_{n}^{\lambda}(\theta)= (\frac{u-1}{u})^{n}.$$
 \end{lemma}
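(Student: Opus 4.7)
The plan is to pass first from $K_n^\lambda(\theta)$ to the cleaner quantity $\tilde K_n^\lambda(\theta)$, and then to isolate a single dominant index $l=u-1$ in the numerator and denominator of $\tilde K_n^\lambda(\theta)$. For the reduction, Lemma \ref{UBMH} gives $|A_{k,l}(\theta)-A_{k,l}|\le\theta l A_{k,l}\le\theta[\lambda]A_{k,l}$ uniformly for $l\in\{1,\ldots,[\lambda]\}$ and $k\ge l$. Replacing $A_{k+n,l}(\theta)$ by $A_{k+n,l}$ in the numerator, and $A_{k,l}(\theta)$ by $A_{k,l}$ in the denominator, multiplies each of them by a factor $1+O(\theta)$, so $\lim_{\theta\to 0}K_n^\lambda(\theta)=\lim_{\theta\to 0}\tilde K_n^\lambda(\theta)$ whenever either limit exists. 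It therefore suffices to compute the limit for $\tilde K_n^\lambda(\theta)$.

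Set $x=\lambda\log\frac{1}{\theta}$, $T_l(\theta)=\sum_{k\ge l}\frac{x^k}{k!}A_{k,l}$, $T_l^{(n)}(\theta)=\sum_{k\ge l}\frac{x^k}{k!}A_{k+n,l}$, $w_l(\theta)=\theta^l T_l(\theta)$, and $r_l(\theta)=T_l^{(n)}(\theta)/T_l(\theta)$, so that
$$\tilde K_n^\lambda(\theta)=\frac{\sum_{l=1}^{[\lambda]}w_l(\theta)\,r_l(\theta)}{\sum_{l=1}^{[\lambda]}w_l(\theta)}.$$
Lemma \ref{COEF1} gives $A_{k+n,l}/A_{k,l}\to(\tfrac{l}{l+1})^n$ as $k\to\infty$; the coefficients $A_{k,l}$ are bounded in $k$, hence both generating series are entire, and Lemma \ref{ML} applied as $x\to\infty$ yields $r_l(\theta)\to(\tfrac{l}{l+1})^n$ for each fixed $l$. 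The lemma will follow as soon as $w_l(\theta)/w_{u-1}(\theta)\to 0$ for every $l\in\{1,\ldots,[\lambda]\}\setminus\{u-1\}$.

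The concentration is the core of the proof and is where Lemma \ref{COEF2} enters. A short generating function identity (multiplying $\sum_m\frac{x^m}{m!}A_{m,l}$ by the exponential series) gives $\sum_{k\ge 0}\frac{x^k}{k!}C_{k,l}=e^{(\lambda-l)x/\lambda}T_l(\theta)$, and since $e^{(\lambda-l)x/\lambda}=\theta^{-(\lambda-l)}$ this rewrites as $w_l(\theta)=\theta^\lambda\sum_k\frac{x^k}{k!}C_{k,l}$, so the common factor $\theta^\lambda$ cancels in every ratio $w_l/w_{u-1}$. Lemma \ref{COEF2} then gives $C_{k,l}\sim D_l k^{-l/2}b_l^k$ with $b_l=\tfrac{\lambda-l}{\lambda}+\tfrac{l}{l+1}$ and a positive constant $D_l$. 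Direct computation yields $b_{u-1}-b_u=\tfrac{1}{\lambda}-\tfrac{1}{u(u+1)}$ and $b_{u-1}-b_{u-2}=\tfrac{1}{u(u-1)}-\tfrac{1}{\lambda}$, which combined with the strict concavity of $l\mapsto b_l$ shows that $l=u-1$ is the strict integer maximum of $b_l$ over $\{1,\ldots,[\lambda]\}$ whenever $\lambda\in(u(u-1),u(u+1))$, while at the critical point $\lambda=u(u+1)$ only $l=u$ ties. For every $l$ with $b_l<b_{u-1}$ the coefficient ratio $C_{k,l}/C_{k,u-1}$ tends to zero exponentially in $k$, and Lemma \ref{ML} transfers this to $w_l(\theta)/w_{u-1}(\theta)\to 0$.

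The main obstacle is the critical case $\lambda=u(u+1)$ at the tying index $l=u$, where the leading exponentials $b_l^k$ no longer separate the two terms. There the polynomial prefactors from Lemma \ref{COEF2} break the tie: $C_{k,u}/C_{k,u-1}\sim(D_u/D_{u-1})k^{-1/2}\to 0$, and one more application of Lemma \ref{ML} yields $w_u(\theta)/w_{u-1}(\theta)\to 0$. All remaining indices $l\ne u-1$ still satisfy $b_l<b_{u-1}$ and are handled by the exponential comparison of the previous paragraph. Combining the concentration of the $w_l$'s with $r_l(\theta)\to(\tfrac{l}{l+1})^n$ gives $\tilde K_n^\lambda(\theta)\to(\tfrac{u-1}{u})^n$, and the reduction at the outset delivers the same limit for $K_n^\lambda(\theta)$.
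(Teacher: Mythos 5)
Your proposal is correct and follows essentially the same route as the paper: the same reduction from $K_n^\lambda$ to $\tilde K_n^\lambda$ via the bound $|A_{k,l}(\theta)-A_{k,l}|\le\theta l A_{k,l}$ of Lemma \ref{UBMH}, the same decomposition into weights $w_l$ times ratios $r_l$ handled by Lemmas \ref{COEF1} and \ref{ML}, and the same $\theta^{\lambda}$-normalization producing the Cauchy-product coefficients $C_{k,l}$ so that Lemma \ref{COEF2} identifies $l=u-1$ as dominant, with the tie at $l=u$ for $\lambda=u(u+1)$ broken by the $k^{-1/2}$ prefactor. Your concavity argument for locating the maximizer of $b_l$ is only a cosmetic variant of the paper's derivative-based three-case analysis.
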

 
\textbf{[PROOF OF THEOREM \ref{Homozygosity_LLN}]:}

\begin{proof}
 Let us use $\phi_{H}$ to denote the moment generating function of the homozygosity H$_{2}$ under $\pi_{\lambda,\theta}$. Thus,
\begin{align*}
&\phi_{H}(t)=\frac{E\exp\{tH_{2}\}\cdot\exp\{\lambda\log\theta H_{2}\}}{E\exp\{\lambda\log\theta H_{2}\}}.
\end{align*}
For some technical reason,  we need to multiply the numerator and denumerator in the above equation by the common term $\lambda\log\frac{1}{\theta}$; then
\begin{align*}
\phi_{H}(t)=&e^{t}\frac{E\exp\{-t(1-H_{2})\}\cdot\exp\{\lambda\log\frac{1}{\theta}(1-H_{2})\}}
{E\exp\{\lambda\log\frac{1}{\theta}(1-H_{2})\}}\\
=&e^{t}\Bigg(1+\sum_{n=1}^{\infty}\frac{(-t)^{n}}{n!}
\frac{E(1-H_{2})^{n}\exp\{\lambda\log\frac{1}{\theta}(1-H_{2})\}}
{E\exp\{\lambda\log\frac{1}{\theta}(1-H_{2})\}}\Bigg)\\
=&e^{t}\Bigg(1+\sum_{n=1}^{\infty}\frac{(-t)^{n}}{n!}
\frac{\sum_{m=0}^{\infty}\frac{(\lambda\log\frac{1}{\theta})^{m}}{m!}E(1-H_{2})^{n+m}}
{\sum_{m=0}^{\infty}\frac{(\lambda\log\frac{1}{\theta})^{m}}{m!}E(1-H_{2})^{m}}\Bigg).
\end{align*} 
In the above expansion, all terms are positive, which greatly facilitates our calculations.
If we denote the limit of $\phi_{H}(t)$, as $\theta\rightarrow0$,  by $\psi_{H}(t)$, then we have
\begin{align*}
\psi_{H}(t)=e^{t}\Bigg[1+&\lim_{\theta\rightarrow0}\sum_{n=1}^{\infty}\frac{(-t)^{n}}{n!}\Bigg(\frac{\sum_{k=1}^{\infty}\frac{(\lambda\log\frac{1}{\theta})^{k}}{k!}\sum_{l=1}^{k}A_{k+n,l}(\theta)\theta^{l}}{1+\sum_{k=1}^{\infty}\frac{(\lambda\log\frac{1}{\theta})^{k}}{k!}\sum_{l=1}^{k}A_{k,l}(\theta)\theta^{l}}\\
+&\frac{m_{n}+\sum_{k=1}^{\infty}\frac{(\lambda\log\frac{1}{\theta})^{k}}{k!}\sum_{l=k+1}^{k+n}A_{k+n,l}(\theta)\theta^{l}}{1+\sum_{k=1}^{\infty}\frac{(\lambda\log\frac{1}{\theta})^{k}}{k!}\sum_{l=1}^{k}A_{k,l}(\theta)\theta^{l}}\Bigg)\Bigg].
\end{align*}
By the Lebesgue dominant convergence theorem, we can switch the order of summation and limit. Thus,
\begin{eqnarray}
\psi_{H}(t)=e^{t}\Bigg[1&+&\sum_{n=1}^{\infty}\frac{(-t)^{n}}{n!}\Bigg(\lim_{\theta\rightarrow0}\frac{\sum_{k=1}^{\infty}\frac{(\lambda\log\frac{1}{\theta})^{k}}{k!}\sum_{l=1}^{k}A_{k+n,l}(\theta)\theta^{l}}{1+\sum_{k=1}^{\infty}\frac{(\lambda\log\frac{1}{\theta})^{k}}{k!}\sum_{l=1}^{k}A_{k,l}(\theta)\theta^{l}}\label{afteralla}\\
&+&\lim_{\theta\rightarrow0}\frac{m_{n}+\sum_{k=1}^{\infty}\frac{(\lambda\log\frac{1}{\theta})^{k}}{k!}\sum_{l=k+1}^{k+n}A_{k+n,l}(\theta)\theta^{l}}{1+\sum_{k=1}^{\infty}\frac{(\lambda\log\frac{1}{\theta})^{k}}{k!}\sum_{l=1}^{k}A_{k,l}(\theta)\theta^{l}}\Bigg)\Bigg].\nonumber
\end{eqnarray}
Now we claim that 
\begin{align}
\lim_{\theta\rightarrow0}\frac{m_{n}+\sum_{k=1}^{\infty}\frac{(\lambda\log\frac{1}{\theta})^{k}}{k!}\sum_{l=1+k}^{k+n}A_{k+n,l}(\theta)\theta^{l}}{1+\sum_{k=1}^{\infty}\frac{(\lambda\log\frac{1}{\theta})^{k}}{k!}\sum_{l=1}^{k}A_{k,l}(\theta)\theta^{l}}=0.\label{claim_n_terms}
\end{align}
Indeed, we have
\begin{align*}
0\leq \frac{m_{n}+\sum_{k=1}^{\infty}\frac{(\lambda\log\frac{1}{\theta})^{k}}{k!}\sum_{l=1+k}^{k+n}A_{k+n,l}(\theta)\theta^{l}}{1+\sum_{k=1}^{\infty}\frac{(\lambda\log\frac{1}{\theta})^{k}}{k!}\sum_{l=1}^{k}A_{k,l}(\theta)\theta^{l}}\leq m_{n}+\sum_{k=1}^{\infty}\frac{(\lambda\log\frac{1}{\theta})^{k}}{k!}\sum_{l=1+k}^{k+n}A_{k+n,l}(\theta)\theta^{l}.
\end{align*}
By Lemma \ref{UBMH},  we have
\begin{align*}
&m_{n}+\sum_{k=1}^{\infty}\frac{(\lambda\log\frac{1}{\theta})^{k}}{k!}\sum_{l=1+k}^{k+n}A_{k+n,l}(\theta)\theta^{l}\leq m_{n}+\sum_{k=1}^{\infty}\frac{(\lambda\log\frac{1}{\theta})^{k}}{k!}\sum_{l=k+1}^{k+n}\frac{\theta^{l}}{2^{l-2}}\\
 \leq& m_{n}+4\sum_{k=1}^{\infty}\frac{(\lambda\frac{\theta}{2}\log\frac{1}{\theta})^{k}}{k!}(1-(\frac{\theta}{2})^{n})\frac{\theta}{2-\theta}\\
  =&m_{n}+4\frac{\theta}{2-\theta}(1-(\frac{\theta}{2})^{n})(e^{\lambda\frac{\theta}{2}\log\frac{1}{\theta}}-1)\rightarrow 0, \mbox{ as }\theta\rightarrow0.
\end{align*}
We have used the fact that $m_{n}\rightarrow0$, as $\theta\rightarrow0,$ which is due to $\mbox{PD}(\theta)(dx)\rightarrow\delta_{(1,0,\cdots)}(dx)$ as $\theta\rightarrow0$. Thus, claim (\ref{claim_n_terms}) is true.  Therefore, by switching the summation order in (\ref{afteralla}), we  have
$$
\psi_{H}(t)=e^{t}\Bigg(1+\sum_{n=1}^{\infty}\frac{(-t)^{n}}{n!}\lim_{\theta\rightarrow0}\frac{\sum_{l=1}^{\infty}\theta^{l}\sum_{k=l}^{\infty}\frac{(\lambda\log\frac{1}{\theta})^{k}}{k!}A_{k+n,l}(\theta)}{1+\sum_{l=1}^{\infty}\theta^{l}\sum_{k=l}^{\infty}\frac{(\lambda\log\frac{1}{\theta})^{k}}{k!}A_{k,l}(\theta)}\Bigg).
$$
Now what we need to show is, for $u(u-1)<\lambda\leq u(u+1), u\geq1$, 
\begin{align}
\lim_{\theta\rightarrow0}\frac{\sum_{l=1}^{\infty}\theta^{l}\sum_{k=l}^{\infty}\frac{(\lambda\log\frac{1}{\theta})^{k}}{k!}A_{k+n,l}(\theta)}{1+\sum_{l=1}^{\infty}\theta^{l}\sum_{k=l}^{\infty}\frac{(\lambda\log\frac{1}{\theta})^{k}}{k!}A_{k,l}(\theta)}=\left(\frac{u-1}{u}\right)^{n}.\label{CLAIM}
\end{align}
Once we have got the above equation, then
\begin{align*}
\psi_{H}(t)=&e^{t}\left(1+\sum_{n=1}^{\infty}\frac{(-t)^{n}}{n!}\Big(\frac{u-1}{u}\Big)^{n}\right)\\
=&e^{t}\left(1+\sum_{n=1}^{\infty}\frac{\Big(-\frac{t(u-1)}{u}\Big)^{n}}{n!}\right)=e^{t}e^{-\frac{t(u-1)}{u}}=e^{\frac{t}{u}}.
\end{align*}
Thus, $\psi_{H}(t)=\sum_{u=1}^{\infty}I_{(u(u-1),u(u+1)]}(\lambda)e^{\frac{t}{u}}$. Therefore, the limiting distribution of $H_{2}$ under $\pi_{\lambda,\theta}$ is $\sum_{u=1}^{\infty}I_{(u(u-1),u(u+1)]}(\lambda)\delta_{\frac{1}{u}}.$

Now we are going to verify the claim (\ref{CLAIM}). Firstly, when $0<\lambda\leq2$, we have
\begin{align*}
0\leq &\lim_{\theta\rightarrow0}\frac{\sum_{l=1}^{\infty}\theta^{l}\sum_{k=l}^{\infty}\frac{(\lambda\log\frac{1}{\theta})^{k}}{k!}A_{k+n,l}(\theta)}{1+\sum_{l=1}^{\infty}\theta^{l}\sum_{k=l}^{\infty}\frac{(\lambda\log\frac{1}{\theta})^{k}}{k!}A_{k,l}(\theta)}\leq \sum_{l=1}^{\infty}\theta^{l}\sum_{k=l}^{\infty}\frac{(\lambda\log\frac{1}{\theta})^{k}}{k!}A_{k+n,l}(\theta)\\
\leq&\theta \sum_{k=1}^{\infty}\frac{(\lambda\log\frac{1}{\theta})^{k}}{k!}A_{k+n,1}(\theta)+\theta^{2}\sum_{k=2}^{\infty}\frac{(\lambda\log\frac{1}{\theta})^{k}}{k!}A_{k+n,2}(\theta)\\
 &+\sum_{l=3}^{\infty}\theta^{l}\sum_{k=l}^{\infty}\frac{(\lambda\log\frac{1}{\theta})^{k}}{k!}A_{k+n,l}(\theta).
\end{align*}
We can actually show that the above three terms approach 0 as $\theta\rightarrow0$. Indeed, by Lemma \ref{UBMH}, we have
\begin{align*}
0\leq&\sum_{l=3}^{\infty}\theta^{l}\sum_{k=l}^{\infty}\frac{(\lambda\log\frac{1}{\theta})^{k}}{k!}A_{k+n,l}(\theta)
\leq\sum_{l=3}^{\infty}\theta^{l}\sum_{k=l}^{\infty}\frac{(\lambda\log\frac{1}{\theta})^{k}}{k!}\frac{1}{2^{l-2}}\\
\leq&4\sum_{l=3}^{\infty}(\frac{\theta}{2})^{l}e^{\lambda\log\frac{1}{\theta}}=\frac{4}{\theta^{\lambda}}(\frac{\theta}{2})^{3}\frac{1}{1-\theta/2}=\frac{\theta^{3-\lambda}}{2-\theta}\rightarrow0,\mbox{ as } \theta\rightarrow0.
\end{align*}
And by Lemma \ref{UBMH}, we have
\begin{align*}
0\leq &\theta \sum_{k=1}^{\infty}\frac{(\lambda\log\frac{1}{\theta})^{k}}{k!}A_{k+n,1}(\theta)\leq \theta \sum_{k=1}^{\infty}\frac{(\lambda\log\frac{1}{\theta})^{k}}{k!}A_{k+n,1}\\
\leq &\frac{\sum_{k=1}^{\infty}\frac{(\lambda\log\frac{1}{\theta})^{k}}{k!}A_{k+n,1}}{\sum_{k=0}^{\infty}\frac{(\log\frac{1}{\theta})^{k}}{k!}}\rightarrow0,\quad \mbox{ as } \theta\rightarrow0.
\end{align*}
The above limit is due to Lemma \ref{COEF1} and Lemma \ref{ML}. Similarly, 
\begin{align*}
0\leq &\theta^{2} \sum_{k=2}^{\infty}\frac{(\lambda\log\frac{1}{\theta})^{k}}{k!}A_{k+n,2}(\theta)\leq \theta^{2} \sum_{k=1}^{\infty}\frac{(\lambda\log\frac{1}{\theta})^{k}}{k!}A_{k+n,2}\\
\leq &\frac{\sum_{k=2}^{\infty}\frac{(\lambda\log\frac{1}{\theta})^{k}}{k!}A_{k+n,2}}{\sum_{k=0}^{\infty}\frac{(2\log\frac{1}{\theta})^{k}}{k!}}\rightarrow0,\quad \mbox{ as } \theta\rightarrow0.
\end{align*}
Thus, we have for $0<\lambda\leq 2$,
$$
\lim_{\theta\rightarrow0}\frac{\sum_{l=1}^{\infty}\theta^{l}\sum_{k=l}^{\infty}\frac{(\lambda\log\frac{1}{\theta})^{k}}{k!}A_{k+n,l}(\theta)}{1+\sum_{l=1}^{\infty}\theta^{l}\sum_{k=l}^{\infty}\frac{(\lambda\log\frac{1}{\theta})^{k}}{k!}A_{k,l}(\theta)}=0=(\frac{1-1}{1})^{n}.
$$
Secondly, for $u(u-1)<\lambda\leq u(u+1), u\geq2$, then $\lambda>2$.  we can show that 
\begin{align}
\lim_{\theta\rightarrow0}\frac{\sum_{l=1}^{\infty}\theta^{l}\sum_{k=l}^{\infty}\frac{(\lambda\log\frac{1}{\theta})^{k}}{k!}A_{k+n,l}(\theta)}{1+\sum_{l=1}^{\infty}\theta^{l}\sum_{k=l}^{\infty}\frac{(\lambda\log\frac{1}{\theta})^{k}}{k!}A_{k,l}(\theta)}=\lim_{\theta\rightarrow0}K_{n}^{\lambda}(\theta);\label{CLAIMK}
\end{align}
then by Lemma \ref{COEF}, we have proved claim (\ref{CLAIM}).  Now we only need to verify (\ref{CLAIMK}).  To this end, we rewrite
$$
\frac{\sum_{l=1}^{\infty}\theta^{l}\sum_{k=l}^{\infty}\frac{(\lambda\log\frac{1}{\theta})^{k}}{k!}A_{k+n,l}(\theta)}{1+\sum_{l=1}^{\infty}\theta^{l}\sum_{k=l}^{\infty}\frac{(\lambda\log\frac{1}{\theta})^{k}}{k!}A_{k,l}(\theta)}
$$
as
$$
\frac{\sum_{l=1}^{[\lambda]}\theta^{l}\sum_{k=l}^{\infty}\frac{(\lambda\log\frac{1}{\theta})^{k}}{k!}A_{k+n,l}(\theta)+\sum_{l=1+[\lambda]}^{\infty}\theta^{l}\sum_{k=l}^{\infty}\frac{(\lambda\log\frac{1}{\theta})^{k}}{k!}A_{k+n,l}(\theta)}{1+\sum_{l=1}^{[\lambda]}\theta^{l}\sum_{k=l}^{\infty}\frac{(\lambda\log\frac{1}{\theta})^{k}}{k!}A_{k,l}(\theta)+\sum_{l=1+[\lambda]}^{\infty}\theta^{l}\sum_{k=l}^{\infty}\frac{(\lambda\log\frac{1}{\theta})^{k}}{k!}A_{k,l}(\theta)}.
$$
 By Lemma \ref{FT}, we know, as $\theta\rightarrow0$
$$
\frac{\sum_{l=1+[\lambda]}^{\infty}\theta^{l}\sum_{k=l}^{\infty}\frac{(\lambda\log\frac{1}{\theta})^{k}}{k!}A_{k+n,l}(\theta)}{1+\sum_{l=1}^{[\lambda]}\theta^{l}\sum_{k=l}^{\infty}\frac{(\lambda\log\frac{1}{\theta})^{k}}{k!}A_{k,l}(\theta)}\rightarrow0,
$$
and
$$
\frac{\sum_{l=1+[\lambda]}^{\infty}\theta^{l}\sum_{k=l}^{\infty}\frac{(\lambda\log\frac{1}{\theta})^{k}}{k!}A_{k,l}(\theta)}{1+\sum_{l=1}^{[\lambda]}\theta^{l}\sum_{k=l}^{\infty}\frac{(\lambda\log\frac{1}{\theta})^{k}}{k!}A_{k,l}(\theta)}\rightarrow0.
$$
Therefore, claim (\ref{CLAIMK}) is proved. Theorem \ref{Homozygosity_LLN} is thus proved!
\end{proof}

Now we are ready to show the weak law of large numbers for $\pi_{\lambda,\theta}$. Define $d(\cdot,\cdot)$ as
 $$
 d(x,y)=\sum_{i=1}^{\infty}\frac{|x_{i}-y_{i}|}{2^{i}}.
 $$
 Under $d(\cdot,\cdot)$, $(\bar{\triangledown}_{\infty},d)$ is a compact space, and the LDP for $\pi_{\lambda,\theta}$ was originally proved under the metric $d(\cdot,\cdot)$ in \cite{MR2519357}. Denote $C(\bar{\triangledown}_{\infty})$ to be the set of all continuous functions in $(\bar{\triangledown}_{\infty},d)$.
 
 \begin{theorem}\label{good_stuff}
 For a given $\lambda>0$, $\pi_{\lambda,\theta}$ converges weakly to the following
$$
\sum_{k=1}^{\infty}{I}_{(k(k-1),k(k+1)]}(\lambda)\delta_{(\frac{1}{k},\cdots,\frac{1}{k},0,\cdots)}.
$$
\end{theorem}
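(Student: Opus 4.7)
The plan is to combine the LDP of \cite{MR2519357} on the compact space $(\bar{\triangledown}_\infty, d)$ with the homozygosity LLN (Theorem~\ref{Homozygosity_LLN}). Since $(\bar{\triangledown}_\infty, d)$ is compact the family $\{\pi_{\lambda,\theta}\}$ is automatically tight, so along any sequence $\theta_m\downarrow 0$ I can extract a subsequence with $\pi_{\lambda,\theta_{m_j}}\Rightarrow\mu$; it will be enough to show every such weak limit $\mu$ equals the claimed point mass. The LDP upper bound at speed $\log(1/\theta)$ gives $\pi_{\lambda,\theta}(F)\to 0$ for every closed $F$ with $\inf_F S_\lambda>0$, so by Portmanteau $\mu$ must be concentrated on the zero set $Z_\lambda:=\{S_\lambda=0\}$.

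Next I will identify $Z_\lambda$ explicitly. On the stratum $L_n\setminus L_{n-1}$ one has $J\equiv n-1$, and the constraint $x_1+\cdots+x_n=1$ with $x_i\geq 0$ forces $\varphi_2(x)\geq 1/n$ with equality only at the uniform point $u_n:=(\tfrac1n,\ldots,\tfrac1n,0,\ldots)$. Hence the minimum of $J+\lambda\varphi_2$ on $L_n\setminus L_{n-1}$ is $f(n):=(n-1)+\lambda/n$, attained uniquely at $u_n$. Since $f(n+1)-f(n)=1-\lambda/(n(n+1))$, $f$ is minimized over $\mathbb{Z}_{\geq 1}$ at a unique $k$ when $k(k-1)<\lambda<k(k+1)$, and at both $k$ and $k+1$ when $\lambda=k(k+1)$. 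Consequently $Z_\lambda=\{u_k\}$ in the strictly noncritical range---whence $\mu=\delta_{u_k}$ immediately---while $Z_\lambda=\{u_k,u_{k+1}\}$ at the critical values.

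For the critical value $\lambda=k(k+1)$ the LDP alone leaves $\mu=\alpha\delta_{u_k}+(1-\alpha)\delta_{u_{k+1}}$ with $\alpha\in[0,1]$ undetermined, and this is the main obstacle. To pin down $\alpha$ I will use Theorem~\ref{Homozygosity_LLN}: since $\lambda=k(k+1)\in(k(k-1),k(k+1)]$, the law of $H_2$ under $\pi_{\lambda,\theta}$ tends weakly to $\delta_{1/k}$, and because $0\leq H_2\leq 1$ this upgrades to $E_{\pi_{\lambda,\theta_{m_j}}}[\varphi_2]\to 1/k$. A short truncation argument shows that $\varphi_2$ is continuous on $(\bar{\triangledown}_\infty,d)$: $d$-convergence implies coordinatewise convergence, and the monotonicity $x_1\geq x_2\geq\cdots$ together with $\sum_i x_i\leq 1$ yields the uniform tail bound $\sum_{i>N}x_i^2\leq x_N\to 0$. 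Applying continuity gives $\int\varphi_2\,d\mu=\alpha/k+(1-\alpha)/(k+1)=1/k$, which forces $\alpha=1$ and hence $\mu=\delta_{u_k}$. Since every subsequential limit agrees, the full family $\pi_{\lambda,\theta}$ converges weakly to $\delta_{u_k}$ at the critical value as well, completing the theorem.
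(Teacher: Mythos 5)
Your argument is correct and rests on the same two pillars as the paper's proof --- the LDP for $\pi_{\lambda,\theta}$ to localize mass near the zero set of $S_\lambda$, and Theorem~\ref{Homozygosity_LLN} to break the tie between $u_k$ and $u_{k+1}$ at the critical value $\lambda=k(k+1)$ --- but the packaging is genuinely different. The paper never extracts subsequential limits: it fixes a test function $f$, splits $\int|f-f(u_k)|\,d\pi_{\lambda,\theta}$ over the events $\{S_\lambda\ge\delta\}$, $\{|\varphi_2-\tfrac1k|\ge\delta\}$ and their complement, and then proves the concrete geometric inclusion $(S_\lambda<\delta)\cap(|\varphi_2-\tfrac1k|<\delta)\subset B_\delta(u_k)$ by explicit estimates on the metric $d$ (showing in particular that this event is disjoint from $L_{k+1}$ and that on $L_k$ closeness of $\varphi_2$ to $\tfrac1k$ forces each coordinate close to $\tfrac1k$). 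You instead invoke tightness on the compact space, identify $Z_\lambda=\{u_k,u_{k+1}\}$, and determine the weight $\alpha$ by the first moment of $\varphi_2$, using the (correctly justified) continuity of $\varphi_2$ via the uniform tail bound $\sum_{i>N}x_i^2\le x_N\le 1/N$. Your route is shorter and avoids all metric computations; the paper's is self-contained at the level of explicit inequalities. One small point you should spell out: the closed-set Portmanteau inequality $\limsup\pi_{\theta_{m_j}}(F)\le\mu(F)$ goes the wrong way for concluding $\mu(F)=0$ from $\pi_{\lambda,\theta}(F)\to0$; to get $\mu(Z_\lambda^c)=0$ you should take a compact $K\subset Z_\lambda^c$, thicken it to a closed $\epsilon$-neighbourhood $F'$ still disjoint from $Z_\lambda$ (possible since $S_\lambda$ is lower semicontinuous on a compact space, so $\inf_{F'}S_\lambda>0$), and apply the open-set inequality to $\mathrm{int}(F')\supset K$. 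This is routine but worth a line.
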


\begin{proof} 
For integer $k\geq1$, and $(k-1)k<\lambda<k(k+1)$, the limit of $\pi_{\lambda,\theta}$ has already been verified in \cite{MR2519357}. Therefore, we only need to consider the critical case $\lambda=(k+1)k, k\geq1$. For a fixed $k\geq 1$, and $\lambda=(k+1)k$, we need to prove that $\pi_{\lambda,\theta}$ converges weakly to $\delta_{(\frac{1}{k},\cdots,\frac{1}{k},0,\cdots)}$.

Define $B_{\delta}(\frac{1}{k},\cdots,\frac{1}{k},0,\cdots)$ to be a ball with
center $(\frac{1}{k},\cdots,\frac{1}{k},0,\cdots)$ and radius $\delta$.
For a given $f\in C(\bar{\triangledown}_{\infty})$, one can conclude that, $\forall\epsilon>0,\exists \delta<\frac{1}{k(k+1)+1},$ such that \\
$\forall x\in B_{\delta}(\frac{1}{k},\cdots,\frac{1}{k},0,\cdots)$, 
$$
\Big|f(x)-f\left(\frac{1}{k},\cdots,\frac{1}{k},0,\cdots\right)\Big|<\epsilon.
$$
Thus,
\begin{align*}
&\Big|\int_{\triangledown_{\infty}}f(x)\pi_{\lambda,\theta}(dx)-f\left(\frac{1}{k},\cdots,\frac{1}{k},0,\cdots\right)\Big|\\
=&\Big|\int_{\triangledown_{\infty}}\left(f(x)-f\left(\frac{1}{k},\cdots,\frac{1}{k},0,\cdots\right)\right)\pi_{\lambda,\theta}(dx)\Big|\\
\leq &\int_{\triangledown_{\infty}}\Big|f(x)-f\left(\frac{1}{k},\cdots,\frac{1}{k},0,\cdots\right)\Big|\pi_{\lambda,\theta}(dx)\\
=&\int_{S_{\lambda}\geq\delta}\bigg|f(x)-f\left(\frac{1}{k},\cdots,\frac{1}{k},0,\cdots\right)\bigg|\pi_{\lambda,\theta}(dx)\\
&+\int_{S_{\lambda}<\delta}\bigg|f(x)-f\left(\frac{1}{k},\cdots,\frac{1}{k},0,\cdots\right)\bigg|\pi_{\lambda,\theta}(dx)\\
\leq& 2\|f\|_{\infty}\pi_{\lambda,\theta}(S_{\lambda}\geq\delta)+\int_{(S_{\lambda}<\delta)\cap(|\varphi_{2}-\frac{1}{k}|\geq\delta)}\bigg|f(x)-f\left(\frac{1}{k},\cdots,\frac{1}{k},0,\cdots\right)\bigg|\pi_{\lambda,\theta}(dx)\\
&+\int_{(S_{\lambda}<\delta)\cap(|\varphi_{2}-\frac{1}{k}|<\delta)}\bigg|f(x)-f\left(\frac{1}{k},\cdots,\frac{1}{k},0,\cdots\right)\bigg|\pi_{\lambda,\theta}(dx)\\
\leq& 2\|f\|_{\infty}\bigg[\pi_{\lambda,\theta}(S_{\lambda}\geq\delta)+\pi_{\lambda,\theta}\left(\bigg|\varphi_{2}-\frac{1}{k}\bigg|\geq\delta\right)\bigg]\\
&+\int_{(S_{\lambda}<\delta)\cap(|\varphi_{2}-\frac{1}{k}|<\delta)}\bigg|f(x)-f\left(\frac{1}{k},\cdots,\frac{1}{k},0,\cdots\right)\bigg|\pi_{\lambda,\theta}(dx)
\end{align*}
By the LDP for $\pi_{\lambda,\theta}$ and the weak convergence of $H_{2}$ under $\pi_{\lambda,\theta}$, we have
$$
\lim_{\theta\rightarrow0}\pi_{\lambda,\theta}(S_{\lambda}\geq\delta)=\lim_{\theta\rightarrow0}\pi_{\lambda,\theta}\left(\bigg|\varphi_{2}-\frac{1}{k}\bigg|\geq\delta\right)=0.
$$
Moreover, we claim that 
\begin{align}
(S_{\lambda}<\delta)\cap\left(\bigg|\varphi_{2}-\frac{1}{k}\bigg|<\delta\right)\subset
B_{\delta}\left(\frac{1}{k},\cdots,\frac{1}{k},0,\cdots\right).\label{inclusion}
\end{align}
Then we have
$$
\limsup_{\theta\rightarrow0}\bigg|\int_{\triangledown_{\infty}}f(x)\pi_{\lambda,\theta}(dx)-f\left(\frac{1}{k},\cdots,\frac{1}{k},0,\cdots\right)\bigg|\leq \epsilon.
$$
Letting $\epsilon\rightarrow0$, we have
$$
\lim_{\theta\rightarrow0}\int_{\triangledown_{\infty}}f(x)\pi_{\lambda,\theta}(dx)=f\left(\frac{1}{k},\cdots,\frac{1}{k},0,\cdots\right).
$$
Therefore, $\pi_{\lambda,\theta}$ converges weakly to $\delta_{(\frac{1}{k},\cdots,\frac{1}{k},0,\cdots)}$.
Now we need to show the claim (\ref{inclusion}). For $\lambda=(k+1)k$, since
$$
S_{\lambda}(x)=J(x)+(k+1)k\varphi_{2}(x)-\inf_{n\geq1}\left\{\frac{(k+1)k}{n}+n-1\right\}, 
$$
clearly
$$
S_{\lambda}|_{L_{n}}(x)=n-1+k(k+1)\varphi_{2}|_{L_{n}}(x)-2k.
$$
Because $\varphi_{2}|_{L_{n}}(x)$ has a unique minimum point $(\frac{1}{n},\cdots,\frac{1}{n},0,\cdots)$; then
$$
S_{\lambda}\Big|_{L_{n}}(x)\geq n-1+\frac{k(k+1)}{n}-2k.
$$
By the monotonicity of the righthand function in $n$, we know it attains its minimum at $k$ and $k+1$. Since $\delta<\frac{1}{k(k+1)+1}<\frac{2}{k+2},$ one can see, $\forall n\neq k,k+1$,
$$
S_{\lambda}|_{L_{n}}(x)\geq\min\left\{k-1-1+\frac{k(k+1)}{k-1}-2k,k+2-1+\frac{k(k+1)}{k+2}-2k\right\}=\frac{2}{k+2}>\delta.
$$
Then $(S_{\lambda}<\delta)=(S_{\lambda}<\delta)\cap (L_{k}\cup L_{k+1})$. Thus, 
\begin{align*}
&(S_{\lambda}<\delta)\cap\left(\bigg|\varphi_{2}-\frac{1}{k}\bigg|<\delta\right)\\
=&\Bigg[(S_{\lambda}<\delta)\cap\left(\bigg|\varphi_{2}-\frac{1}{k}\bigg|<\delta\right)\cap L_{k}\Big]\cup\Bigg[(S_{\lambda}<\delta)\cap\left(\bigg|\varphi_{2}-\frac{1}{k}\bigg|<\delta\right)\cap L_{k+1}\Bigg].
\end{align*}
But $S_{\lambda}|_{L_{k+1}}=k(k+1)(\varphi_{2}|_{L_{k+1}}-\frac{1}{k+1})$, then $(S_{\lambda}<\delta)\cap L_{k+1}=(\varphi_{2}<\frac{\delta}{k(k+1)}+\frac{1}{k+1})\cap L_{k+1}$. Since $\delta<\frac{1}{k(k+1)+1}$, then $\frac{\delta}{k(k+1)}+\frac{1}{k+1}<\frac{1}{k}-\delta$, thus  $\forall x\in L_{k+1}\cap (|\varphi_{2}-\frac{1}{k}|<\delta)\cap (S_{\lambda}<\delta)$, we have
$$
\varphi_{2}(x)<\frac{\delta}{k(k+1)}+\frac{1}{k+1}<\frac{1}{k}-\delta<\varphi_{2}(x).
$$
Therefore, $(S_{\lambda}<\delta)\cap(|\varphi_{2}-\frac{1}{k}|<\delta)\cap L_{k+1}=\emptyset$, hence,
$$
(S_{\lambda}<\delta)\cap\left(\bigg|\varphi_{2}-\frac{1}{k}\bigg|<\delta\right)=(S_{\lambda}<\delta)\cap\left(\bigg|\varphi_{2}-\frac{1}{k}\bigg|<\delta\right)\cap L_{k}.
$$
Since $\forall x\in L_{k}\cap\left(|\varphi_{2}(x)-\frac{1}{k}|<\delta\right)$, we have
$\frac{1}{k}-\delta<\sum_{i=1}^{k}x_{i}^{2}<\frac{1}{k}+\delta$,  and
$$
\frac{1}{k^{2}}-\frac{\delta}{k}\leq \frac{\sum_{i=1}^{k}x_{i}^{2}}{k}<\frac{1}{k^{2}}+\frac{\delta}{k}.
$$
Therefore, $\sqrt{\frac{1}{k^{2}}-\frac{\delta}{k}}<\min_{1\leq i\leq k}x_{i}\leq \max_{1\leq i\leq k}x_{i}\leq \sqrt{\frac{1}{k^{2}}+\frac{\delta}{k}}$.
Then
\begin{align*}
&d\left(x,\left(\frac{1}{k},\cdots,\frac{1}{k},0,\cdots\right)\right)=\sum_{i=1}^{k}\frac{|x_{i}-1/k|}{2^{i}}\\
\leq &\sum_{i=1}^{k}\frac{1}{2^{i}} \max_{1\leq i\leq k}\big|x_{i}-\frac{1}{k}\big|=\left(1-\frac{1}{2^{k}}\right)\max_{1\leq i\leq k}\big|x_{i}-\frac{1}{k}\big|\\
\leq&\max\left\{\bigg|\sqrt{\frac{1}{k^{2}}-\frac{\delta}{k}}-\frac{1}{k}\bigg|,\bigg|\sqrt{\frac{1}{k^{2}}+\frac{\delta}{k}}-\frac{1}{k}\bigg|\right\}\\
=&\max\left\{\frac{\frac{\delta}{k}}{\sqrt{\frac{1}{k^{2}}-\frac{\delta}{k}}+\frac{1}{k}},\frac{\frac{\delta}{k}}{\sqrt{\frac{1}{k^{2}}+\frac{\delta}{k}}+\frac{1}{k}}\right\}\\
<&\delta.
\end{align*}
Therefore $(S_{\lambda}<\delta)\cap(|\varphi_{2}-\frac{1}{k}|<\delta)\subset B_{\delta}(\frac{1}{k},\cdots,\frac{1}{k},0,\cdots)$, the claim (\ref{inclusion}) is thus proved.
\end{proof}

\section{Proof of Lemmas}

 The LDP estimations of binomial distribution and negative binomial distribution will be needed in this section. Hence, example \ref{LDP_EXAM} is presented.
 
\begin{exm}\label{LDP_EXAM}
Let $X_{\alpha}^{k}=\sum_{l=1}^{k} Y_{l}^{\alpha},$ and $U_{\alpha}^{k}=\sum_{l=1}^{k}V_{l}^{\alpha}$, where $\{Y_{l}^{\alpha},1\leq l\leq k\}$ and $\{V_{l}^{\alpha},1\leq l\leq k\}$ are i.i.d. geometric random variables and Bernoulli random variables respectively; i.e.
$$
P(Y_{l}^{\alpha}=u)=(1-\alpha)^{u}\alpha,u\geq0; \quad P(V_{l}^{\alpha}=v)=\alpha^{v}(1-\alpha)^{1-v},v=0,1.
$$ 
Then the distributions of $X_{\alpha}^{k}$ and $U_{\alpha}^{k}$, denoted by $\mu_{k}$ and $\nu_{k}$, satisfy LDPs with speed $k$ and rate function $I_{1}(x)$ and $I_{2}(x)$ respectively, where
$$
I_{1}(x)=x\log x-(x+1)\log(1+x)-[x\log (1-\alpha)+\log\alpha]
$$
and
$$
I_{2}(x)=x\log\left(\frac{x}{\alpha}\right)+(1-x)\log\left(\frac{1-x}{1-\alpha}\right).
$$
\end{exm}
\begin{proof} 
This can be shown by the Cram\'{e}r theorem. Please refer to \\
\cite{MR2571413}.
\end{proof}

Next, we will embark on a long journey to prove the previous lemmas.

\textbf{[PROOF OF LEMMA \ref{MH}]:}
\begin{proof}
 Define $V_{1}=U_{1},V_{i}=(1-U_{1})\cdots(1-U_{i-1})U_{i},i\geq2,$ where $\{U_{i},i\geq1\}$ are i.i.d. Beta(1,$\theta$). Then
$(V_{1},V_{2},\cdots)$ follows the GEM distribution. Denote $(V_{(1)},V_{(2)},\cdots)$ to be the descending statistics of $(V_{1},V_{2},\cdots)$; then $(V_{(1)},V_{(2)},\cdots)$ follows the Poisson-Dirichlet distribution PD($\theta$). Since $H_{2}=\sum_{i=1}^{\infty}V_{(i)}^{2}=\sum_{i=1}^{\infty}V_{i}^{2},$ one can observe that $1-H_{2}=(1-U_{1}^{2})-(1-U_{1})^{2}+(1-U_{1})^{2}(1-\tilde{H}_{2})=2U_{1}(1-U_{1})+(1-U_{1})^{2}(1-\tilde{H}_{2})$, where $\tilde{H}_{2}=\sum_{i=1}^{\infty}\tilde{V}_{i}^{2}$, and $\tilde{V}_{1}=U_{2}, \tilde{V}_{i}=(1-U_{2})\cdots(1-U_{i})U_{i+1}, i\geq2$. We can see that $(\tilde{V}_{1},\tilde{V}_{2},\cdots)$ follows the GEM distribution as well and is independent of $U_{1}$. Thus,
$E(1-H_{2})^{k}=E(1-\tilde{H}_{2})^{k},$ and 
\begin{align*}
m_{k}=&E(1-H_{2})^{k}=E\left(2U_{1}(1-U_{1})+(1-U_{1})^{2}(1-\tilde{H}_{2})\right)^{k}\\
=&E\sum_{l=0}^{k}\binom{k}{l}\left(2U_{1}(1-U_{1})\right)^{k-l}(1-\tilde{H}_{2})^{l}\\
=&\sum_{l=0}^{k}\binom{k}{l}E\left(2U_{1}(1-U_{1})\right)^{k-l}E(1-\tilde{H}_{2})^{l}\\
=&\sum_{l=0}^{k}\binom{k}{l}\frac{2^{k-l}\Gamma(k-l+1)\Gamma(k+l+\theta)\theta}{\Gamma(2k+1+\theta)}m_{l}.
\end{align*}
 If we isolate $m_{k}$, we have 
 \begin{align}
 m_{k}=\theta\sum_{l=1}^{k-1}\frac{2k+\theta}{2k}\frac{2^{k}k!}{2^{l}l!}\frac{\Gamma(k+l+\theta)}{\Gamma(2k+1+\theta)}m_{l}+\frac{2k+\theta}{2k}\frac{2^{k}k!\Gamma(k+\theta)}{\Gamma(2k+1+\theta)}\theta,\label{RCM}
 \end{align}
 where $k\geq2$, and $m_{1}=\frac{\theta}{1+\theta}.$ We claim that $m_{k}$ has the following expansion
 $$
 m_{k}=\sum_{l=1}^{k}A_{k,l}(\theta)\theta^{l},
 $$
 where 
 \begin{align*}
 A_{k,l}(\theta)&=\sum_{u=l-1}^{k-1}\frac{2k+\theta}{2k}\frac{2^{k}k!}{2^{u}u!}\frac{\Gamma(k+u+\theta)}{\Gamma(2k+1+\theta)}A_{u,l-1}(\theta),l\geq2;\\
  A_{k,1}(\theta)&=\frac{2k+\theta}{2k}\frac{2^{k}k!\Gamma(k+\theta)}{\Gamma(2k+\theta+1)}.
 \end{align*}
 Indeed, for $k=1$, this is obvious. Assume that $m_{k-1}$ has the above expression, then for $m_{k}$, by (\ref{RCM}), we have
 \begin{align*}
 &\theta\sum_{l=1}^{k-1}\frac{2k+\theta}{2k}\frac{2^{k}k!}{2^{l}l!}\frac{\Gamma(k+l+\theta)}{\Gamma(2k+1+\theta)}\sum_{u=1}^{l}A_{l,u}(\theta)\theta^{u}+\frac{2k+\theta}{2k}\frac{2^{k}k!\Gamma(k+\theta)}{\Gamma(2k+1+\theta)}\theta \\
 =&\sum_{u=1}^{k-1}\left[\sum_{l=u}^{k-1}\frac{2k+\theta}{2k}\frac{2^{k}k!}{2^{l}l!}\frac{\Gamma(k+l+\theta)}{\Gamma(2k+1+\theta)}A_{l,u}(\theta)\right]\theta^{u+1}+\frac{2k+\theta}{2k}\frac{2^{k}k!\Gamma(k+\theta)}{\Gamma(2k+1+\theta)}\theta.
 \end{align*}
 Let us denote $p=u+1$; then we have $m_{k}=\sum_{p=1}^{k}A_{k,p}(\theta)\theta^{p},$ where 
 \begin{align*}
 A_{k,1}(\theta)=&\frac{2k+\theta}{2k}\frac{2^{k}k!\Gamma(k+\theta)}{\Gamma(2k+1+\theta)},\\
 A_{k,p}(\theta)=&\sum_{l=p-1}^{k-1}\frac{2k+\theta}{2k}\frac{2^{k}k!}{2^{l}l!}\frac{\Gamma(k+l+\theta)}{\Gamma(2k+1+\theta)}A_{l,p-1}(\theta), \quad p\geq2.
 \end{align*}
 \end{proof}
 
 \textbf{[PROOF OF LEMMA \ref{UBMH}]:}
 
 \begin{proof}
 We use mathematical induction with respect to $p$ to show these conclusions. \\
 \textbf{Step1: } We are going to show $\frac{1}{2^{p+1}}A_{k,p+1}\leq A_{k,p+1}(\theta)\leq A_{k,p+1}.$\\
  When 
 $p=1$, we have, $\forall\theta\in[0,1]$,
  \begin{align*}
 A_{k,1}(\theta)=&\frac{2k+\theta}{2k}\frac{2^{k}k!}{(2k+\theta)\cdots(k+\theta)}=\frac{2^{k-1}(k-1)!}{(2k-1+\theta)\cdots(k+\theta)}\\
 \leq &\frac{2^{k-1}(k-1)!}{(2k-1)\cdots k}=\frac{2^{k}(k-1)!k!}{(2k)!}=A_{k,1},
 \end{align*}
 and 
 $$
 A_{k,1}(\theta)=\frac{2^{k-1}(k-1)!}{(2k-1+\theta)\cdots(k+\theta)}\geq\frac{2^{k-1}(k-1)!}{2k\cdots(k+1)}=\frac{1}{2}A_{k,1}.
 $$
 Moreover, $A_{k,1}=\frac{2^{k}k!(k-1)!}{(2k)!}\leq1<\frac{1}{2^{1-2}}=2$; therefore,
 $$
 \frac{1}{2}A_{k,1}\leq A_{k,1}(\theta)\leq A_{k,1}\leq\frac{1}{2^{1-2}}.
 $$
 Now we assume that $A_{k,p}(\theta)$ satisfies the inequality
 \begin{align}
 \frac{1}{2^{p}}A_{k,p}\leq A_{k,p}(\theta)\leq A_{k,p}\leq \frac{1}{2^{p-2}}, k\geq p\geq1.\label{ASSUMP}
 \end{align}
 Since $A_{k,p+1}(\theta)=\sum_{l=p}^{k-1}\frac{1}{2k}\frac{2^{k}k!}{2^{l}l!}\frac{A_{l,p}(\theta)}{(2k-1+\theta)\cdots(k+l+\theta)}$, by assumption (\ref{ASSUMP}), we have
\begin{align*}
 \frac{1}{2^{p}}\sum^{k-1}_{l=p}\frac{1}{2k}\frac{2^{k}k!}{2^{l}l!}\frac{A_{l,p}}{(2k-1+1)\cdots(k+l+1)} \leq &A_{k,p+1}(\theta)\\
 \leq&\sum_{l=p}^{k-1}\frac{1}{2k}\frac{2^{k}k!}{2^{l}l!}\frac{A_{l,p}}{(2k-1)\cdots(k+l)}.
\end{align*}
 Thus,
 $$
\frac{1}{2^{p}}\sum_{l=p}^{k-1}\frac{k+l}{2k}\frac{2^{k}k!}{2^{l}l!}\frac{\Gamma(k+l)}{\Gamma(2k+1)}A_{l,p}\leq A_{k,p+1}(\theta)\leq \sum_{l=p}^{k-1}\frac{2^{k}k!}{2^{l}l!}\frac{\Gamma(k+l)}{\Gamma(2k+1)}A_{l,p}=A_{k,p+1}.
$$
 But $\frac{k+l}{2k}>\frac{k}{2k}=\frac{1}{2}$, then 
 $$
 \frac{1}{2^{p}}\sum_{l=p}^{k-1}\frac{k+l}{2k}\frac{2^{k}k!}{2^{l}l!}\frac{\Gamma(k+l)}{\Gamma(2k+1)}A_{l,p}
 \geq \frac{1}{2^{p+1}} \sum_{l=p}^{k-1}\frac{2^{k}k!}{2^{l}l!}\frac{\Gamma(k+l)}{\Gamma(2k+1)}A_{l,p}=\frac{1}{2^{p+1}}A_{k,p+1}.
 $$
 Hence, 
 $$
 \frac{1}{2^{p+1}}A_{k,p+1}\leq A_{k,p+1}(\theta)\leq A_{k,p+1}.
 $$
 \textbf{Step 2: } We are going to show  
 \begin{align}
 A_{k,p}\leq \frac{1}{2^{p-2}},\forall k\geq p\geq1. \label{uuuper}
 \end{align}
 Since $A_{k,1}\leq\frac{1}{2^{1-2}}$, we can assume that $A_{k,p}\leq\frac{1}{2^{p-2}},k\geq p$. So 
 $$
 A_{k,p+1}=\sum_{l=p}^{k-1}\frac{2^{k}k!}{2^{l}l!}\frac{\Gamma(k+l)}{\Gamma(2k+1)}A_{l,p}\leq \frac{1}{2^{p-2}}\sum_{l=p}^{k-1}\frac{2^{k}k!\Gamma(k+l)}{2^{l}l!\Gamma(2k+1)}, 
 $$
 where we will show that $\sum_{l=p}^{k-1}\frac{2^{k}k!\Gamma(k+l)}{2^{l}l!\Gamma(2k+1)}<\frac{1}{2}.$ Therefore, $A_{k,p+1}\leq\frac{1}{2^{p-1}},$ and (\ref{uuuper}) is thus proved. Next, to show 
$\sum_{l=p}^{k-1}\frac{2^{k}k!\Gamma(k+l)}{2^{l}l!\Gamma(2k+1)}<\frac{1}{2},$  let us define $B_{k,l}=\frac{2^{k}k!\Gamma(k+l)}{2^{l}l!\Gamma(2k+1)},\quad k-1\geq l\geq p\geq2,$ where $B_{k,l}$ is increasing in $l$ only if $l<k-2$. Because when $l<k-2$
$$
 \frac{B_{k,l+1}}{B_{k,l}}=\frac{2^{l}l!\Gamma(k+l+1)}{2^{l+1}(l+1)!\Gamma(k+l)}=\frac{k+l}{2(l+1)}>1.
$$
Therefore, $B_{k,k-2}$ or $B_{k,k-1}$ should be the maximum term. Since
  $$
  B_{k,k-2}=\frac{2^{k}k!}{2^{k-2}(k-2)!}\frac{\Gamma(2k-2)}{\Gamma(2k+1)}=\frac{1}{2k-1},
  $$
  $$
  B_{k,k-1}=\frac{2^{k}k!}{2^{k-1}(k-1)!}\frac{\Gamma(2k-1)}{\Gamma(2k+1)}=\frac{1}{2k-1},  
$$
 we obtain that 
  $$
  \sum_{l=p}^{k-1}\frac{2^{k}k!\Gamma(k+l)}{2^{l}l!\Gamma(2k+1)}\leq \frac{k-p}{2k-1}=\frac{k-p}{2(k-p)+2p-1}<\frac{1}{2}.
  $$
  \textbf{Step 3:} We are going to show
  $$
  \Big|A_{k,p}(\theta)-A_{k,p}\Big|\leq\theta pA_{k,p}.
  $$
  When $p=1$, $ \Big|A_{k,1}(\theta)-A_{k,1}\Big|\leq\theta A_{k,1}$, for
 \begin{align*}
 \Big|A_{k,1}(\theta)-A_{k,1}\Big|=&\Big|\frac{2^{k}k!\Gamma(k+\theta)}{2k\Gamma(2k+\theta)}-\frac{2^{k}k!\Gamma(k)}{\Gamma(2k+1)}\Big|\\
 =&\Big|\frac{2^{k}k!\Gamma(k)}{\Gamma(2k+1)}\Big(\frac{\Gamma(2k)\Gamma(k+\theta)}{\Gamma(2k+\theta)\Gamma(k)}-1\Big)\Big|\\
  \leq &A_{k,1}\Big|\frac{(2k-1)\cdots k}{(2k+\theta-1)\cdots (k+\theta)}-1\Big|\\
 =&A_{k,1}\Big|(1-\frac{\theta}{2k+\theta-1})\cdots(1-\frac{\theta}{k+\theta})-1\Big|\\
  =&\theta A_{k,1}\Big|\sum_{u=1}^{k}(-1)^{u}\sum_{k\leq l_{1}<\cdots<l_{u}\leq 2k-1}\frac{\theta^{u-1}}{(l_{1}+\theta)\cdots(l_{u}+\theta)}\Big|\\
 \leq&\theta A_{k,1}\sum_{u=1}^{k}\sum_{k\leq l_{1}<\cdots<l_{u}\leq 2k-1}\frac{\theta^{u-1}}{(l_{1}+\theta)\cdots(l_{u}+\theta)}\\
  \leq& \theta A_{k,1}\sum_{u=1}^{k}\sum_{k\leq l_{1}<\cdots<l_{u}\leq 2k-1}\frac{1}{l_{1}\cdots l_{u}}\\
 =&\theta A_{k,1}\Big|(1+\frac{1}{2k-1})\cdots(1+\frac{1}{k})-1\Big|\\
 =&\theta A_{k,1}\Big|\frac{2k}{2k-1}\frac{2k-1}{2k-2}\cdots\frac{k+1}{k}-1\Big|=\theta A_{k,1}|2-1|=\theta A_{k,1}.
 \end{align*}
Therefore we assume that 
  \begin{align}
  \Big|A_{k,p}(\theta)-A_{k,p}\Big|\leq\theta pA_{k,p},\label{CLAIM2}
  \end{align}
  then, for $\Big|A_{k,p+1}(\theta)-A_{k,p+1}\Big|,$ we have
  \begin{align*}
  \Big|A_{k,p+1}(\theta)-A_{k,p+1}\Big|
  =&\Big|\sum_{l=p}^{k-1}\frac{2k+\theta}{2k}\frac{2^{k}k!\Gamma(k+l+\theta)}{2^{l}l!\Gamma(2k+1+\theta)}A_{l,p}(\theta)-\sum_{l=p}^{k-1}\frac{2^{k}k!\Gamma(k+l)}{2^{l}l!\Gamma(2k+1)}A_{l,p}\Big|\\
  \leq&\Big|\sum_{l=p}^{k-1}\frac{2k+\theta}{2k}\frac{2^{k}k!\Gamma(k+l+\theta)}{2^{l}l!\Gamma(2k+1+\theta)}(A_{l,p}(\theta)-A_{l,p})\Big|\\
 &+\Big|\sum_{l=p}^{k-1}\frac{2k+\theta}{2k}\frac{2^{k}k!\Gamma(k+l+\theta)}{2^{l}l!\Gamma(2k+1+\theta)}A_{l,p}-\sum_{l=p}^{k-1}\frac{2^{k}k!\Gamma(k+l)}{2^{l}l!\Gamma(2k+1)}A_{l,p}\Big|\\
 \leq&\sum_{l=p}^{k-1}\frac{2k+\theta}{2k}\frac{2^{k}k!\Gamma(k+l+\theta)}{2^{l}l!\Gamma(2k+1+\theta)}\Big|A_{l,p}(\theta)-A_{l,p}\Big|  \\
 &+\sum_{l=p}^{k-1}\frac{2^{k}k!\Gamma(k+l)}{2^{l}l!\Gamma(2k+1)} A_{l,p}\Big|\frac{\Gamma(k+l+\theta)\Gamma(2k)}{\Gamma(k+l)\Gamma(2k+\theta)}-1\Big|. 
 \end{align*}
  By the assumption (\ref{CLAIM2}), we have
  \begin{align*}
   \Big|A_{k,p+1}(\theta)-A_{k,p+1}\Big|
  \leq& \theta p \sum_{l=p}^{k-1}\frac{2^{k}k!\Gamma(k+l+\theta)}{2^{l}l!2k\Gamma(2k+\theta)}A_{l,p}\\
  &+\sum_{l=p}^{k-1}\frac{2^{k}k!\Gamma(k+l)}{2^{l}l!\Gamma(2k+1)}A_{l,p}\Big|\frac{\Gamma(k+l+\theta)\Gamma(2k)}{\Gamma(2k+\theta)\Gamma(k+l)}-1\Big|,
  \end{align*}
 where
  \begin{align*}
  \Big|\frac{\Gamma(k+l+\theta)\Gamma(2k)}{\Gamma(2k+\theta)\Gamma(k+l)}-1\Big|=&\Big|\frac{(2k-1)\cdots(k+l)}{(2k+\theta-1)\cdots(k+l+\theta)}-1\Big|\\
  =&\Big|(1-\frac{\theta}{2k-1+\theta})\cdots(1-\frac{\theta}{k+l+\theta})-1\Big|\\
  =&\Big|\sum_{u=1}^{k-l}(-1)^{u}\sum_{k+l\leq l_{1}<\cdots<l_{u}\leq 2k-1}\frac{\theta^{u}}{(l_{1}+\theta)\cdots(l_{u}+\theta)}\Big|\\
  \leq&\sum_{u=1}^{k-l}\sum_{k+l\leq l_{1}<\cdots<l_{u}\leq 2k-1}\frac{\theta^{u}}{(l_{1}+\theta)\cdots(l_{u}+\theta)}\\
    \leq&\theta \sum_{u=1}^{k-l}\sum_{k+l\leq l_{1}<\cdots<l_{u}\leq 2k-1}\frac{1}{l_{1}\cdots l_{u}}\\
  =&\theta \Big|(1+\frac{1}{2k-1})\cdots(1+\frac{1}{k+l})-1\Big|\\
 =&\theta \frac{k-l}{k+l}<\theta,
    \end{align*}
    and
    \begin{align*}
    \frac{\Gamma(k+l+\theta)}{2k\Gamma(2k+\theta)}=&\frac{1}{2k(2k+\theta-1)\cdots(k+l+\theta)}\\
    \leq&\frac{1}{2k(2k-1)\cdots(k+l)}=\frac{\Gamma(k+l)}{\Gamma(2k+1)}.
    \end{align*}
  Therefore, 
  \begin{align*}
  \Big|A_{k,p+1}(\theta)-A_{k,p+1}\Big|\leq&\theta p\sum_{l=p}^{k-1}\frac{2^{k}k!}{2^{l}l!}\frac{\Gamma(k+l)}{\Gamma(2k+1)}A_{l,p}+\theta\sum_{l=p}^{k-1}\frac{2^{k}k!}{2^{l}l!}\frac{\Gamma(k+l)}{\Gamma(2k+1)}A_{l,p}\\
  =&\theta(p+1)A_{k,p+1}.
    \end{align*}  
  Thus, we have proved the lemma.
  
  \end{proof}

  \textbf{[PROOF OF LEMMA \ref{FT}]:}
  
  \begin{proof}
  
  By Lemma \ref{UBMH}, we have
  \begin{align*}
 \sum_{l=[\lambda]+1}^{\infty}\theta^{l}\sum_{k=l}^{\infty}\frac{(\lambda\log\frac{1}{\theta})^{k}}{k!}A_{k,l}(\theta)
  \leq& \sum_{l=[\lambda]+1}^{\infty}\theta^{l}\sum_{k=l}^{\infty}\frac{(\lambda\log\frac{1}{\theta})^{k}}{k!}\frac{1}{2^{l-2}}\\
  =& 4\sum_{l=[\lambda]+1}^{\infty}\frac{\theta^{l}}{2^{l}}\sum_{k=l}^{\infty}\frac{(\lambda\log\frac{1}{\theta})^{k}}{k!} \\
  \leq &4\sum_{l=[\lambda]+1}^{\infty}(\frac{\theta}{2})^{l}e^{\lambda\log\frac{1}{\theta}}=4\sum_{l=[\lambda]+1}^{\infty}\frac{\theta^{l-\lambda}}{2^{l}}=4\frac{\theta^{[\lambda]-\lambda+1}}{2^{[\lambda]+1}}\frac{2}{2-\theta}\\
  \rightarrow&0,\mbox{ as } \theta\rightarrow0, \mbox{ due to }[\lambda]+1 >\lambda.
  \end{align*}
  Similarly, we can also show
  $$
  \lim_{\theta\rightarrow0}\sum_{l=[\lambda]+1}^{\infty}\theta^{l}\sum_{k=l}^{\infty}\frac{(\lambda\log\frac{1}{\theta})^{k}}{k!}A_{k+n,l}(\theta)=0.  
  $$
  \end{proof}
  
\textbf{ [PROOF OF LEMMA \ref{COEF1}]:}
\begin{proof} 
 By mathematical induction with respect to $p$, we can prove this lemma. For $p=1$, by Stirling's formula,
 \begin{align}
 \Gamma(z)\sim \sqrt{\frac{2\pi}{z}}(\frac{z}{e})^{z},\label{stirling_formula}
 \end{align}
 we have $A_{k,1}\sim\sqrt{\pi}\frac{1}{\sqrt{k}}(\frac{1}{2})^{k},$ as $k\rightarrow+\infty.$
 We can therefore assume that, as $k\rightarrow+\infty,$
  \begin{align}
 A_{k,p}\sim C_{p}\frac{1}{k^{\frac{p}{2}}}(\frac{p}{p+1})^{k}, \quad C_{p}=C_{p-1}\sqrt{\pi}(\frac{p+1}{p-1})^{\frac{p-1}{2}}.\label{coefficient_assumption}
 \end{align}
For $A_{k,p+1},$  we have
$$
A_{k,p+1}=\sum_{l=p}^{k-1}\frac{2^{k}k!\Gamma(k+l)}{2^{l}l!\Gamma(2k+1)}A_{l,p}.
$$ 
 By the assumption (\ref{coefficient_assumption}), $\forall \epsilon>0,\exists M>0,$ such that $\forall k>M$, 
 $$
 1-\epsilon<\frac{A_{k,p}}{C_{p}\frac{1}{k^{\frac{p}{2}}}(\frac{p}{p+1})^{k}}<1+\epsilon;
 $$
 then we rewrite $A_{k,p+1}$ as $X+Y$, where 
\begin{align*}
 &X=\sum_{l=p}^{M}\frac{2^{k}k!\Gamma(k+l)}{2^{l}l!\Gamma(2k+1)}A_{l,p},
&Y=\sum_{l=M+1}^{k-1}\frac{2^{k}k!\Gamma(k+l)}{2^{l}l!\Gamma(2k+1)}A_{l,p}.
 \end{align*}
  Define $a_{k}(l)=\frac{2^{k}k!\Gamma(k+l)}{2^{l}l!\Gamma(2k+1)}(\frac{p}{p+1})^{l}$, and $\Sigma_{1}=\sum_{l=p}^{k-1}\frac{C_{p}}{l^{\frac{p}{2}}}a_{k}(l).$ Now we are going to show 
  $
  \lim_{k\rightarrow+\infty}\frac{X}{\Sigma_{1}}=0,
  $
  and $\lim_{k\rightarrow+\infty}\frac{Y}{\Sigma_{1}}=1$.\\
   Indeed, since
$$
0\leq X\leq \frac{\max_{p\leq l\leq M}\{A_{l,p}\}(M-p+1)}{2^{p}p!}\frac{2^{k}k!\Gamma(k+M)}{\Gamma(2k+1)}
$$
and
$$
(1-\epsilon)\sum_{l=M+1}^{k-1}\frac{C_{p}}{l^{\frac{p}{2}}}a_{k}(l)\leq Y\leq (1+\epsilon)\sum_{l=M+1}^{k-1}\frac{C_{p}}{l^{\frac{p}{2}}}a_{k}(l),
$$  
 we have
  \begin{align*}
  0\leq &\frac{X}{\Sigma_{1}}\leq\frac{X}{\frac{C_{p}}{(k-1)^{\frac{p}{2}}}a_{k}(k-1)}
  \leq \frac{\max_{p\leq l\leq M}\{A_{l,p}\}(M-p+1)}{2^{p}p!}
  (k-1)^{\frac{p}{2}}\frac{2^{k}k!\Gamma(k+M)}{\Gamma(2k+1)a_{k}(k-1)}\\
  \leq&\frac{\max_{p\leq l\leq M}\{A_{l,p}\}(M-p+1)}{2^{p}p!}
  (k-1)^{\frac{p}{2}}  (\frac{2(p+1)}{p})^{k-1}\frac{\Gamma(k+M)\Gamma(k)}{\Gamma(2k-1)}.
  \end{align*}
  By Stirling's formula (\ref{stirling_formula}), we have, as $k\rightarrow+\infty$,
    \begin{align}
  (k-1)^{\frac{p}{2}}  (\frac{2(p+1)}{p})^{k-1}\frac{\Gamma(k+M)\Gamma(k)}{\Gamma(2k-1)}\sim k^{M+\frac{p+1}{2}}(\frac{p+1}{2p})^{k-1} \rightarrow0.\label{convergence}
   \end{align}
  Thus, $ \lim_{k\rightarrow+\infty}\frac{X}{\Sigma_{1}}=0.$ 
  Similarly, 
  $$
  0\leq\sum_{l=p}^{M}\frac{C_{p}}{l^{\frac{p}{2}}}a_{k}(l)\leq
 \frac{C_{p}(M-p+1)}{p^{\frac{p}{2}}2^{p}p!}(\frac{p}{p+1})^{p}\frac{2^{k}k!\Gamma(k+M)}{\Gamma(2k+1)},
    $$
  then 
  $$
  0\leq \frac{\sum_{l=p}^{M}\frac{C_{p}}{l^{\frac{p}{2}}}a_{k}(l)}{\Sigma_{1}}\leq 
  \frac{(M-p+1)(\frac{p}{p+1})^{p}}{p^{\frac{p}{2}}2^{p}p!}(k-1)^{\frac{p}{2}}(\frac{2(p+1)}{p})^{k-1}\frac{\Gamma(k+M)\Gamma(k)}{\Gamma(k-1)}.
  $$
  Thus, $\frac{\sum_{l=p}^{M}\frac{C_{p}}{l^{\frac{p}{2}}}a_{k}(l)}{\Sigma_{1}}\rightarrow0$ due to (\ref{convergence}) .
   Therefore, for the following inequality,
 $$
  (1-\epsilon)\left(1-\frac{\sum_{l=p}^{M}\frac{C_{p}}{l^{\frac{p}{2}}}a_{k}(l)}{\Sigma_{1}}\right)\leq \frac{Y}{\Sigma_{1}}\leq (1+\epsilon)\left(1-\frac{\sum_{l=p}^{M}\frac{C_{p}}{l^{\frac{p}{2}}}a_{k}(l)}{\Sigma_{1}}\right),
  $$
 if we let $k\rightarrow+\infty$, and $\epsilon\rightarrow0,$ then we have
  $$
  \lim_{k\rightarrow\infty} \frac{Y}{\Sigma_{1}}=1;
  $$
  hence $A_{k,p+1}\sim \Sigma_{1}$. Moreover,
  $$
  a_{k}(l)=\frac{2^{k}k!(k-1)!}{\Gamma(2k+1)}\left(\frac{2(p+1)}{p+2}\right)^{k}
  \frac{\Gamma(k+l)}{l!\Gamma(k)}\left(1-\frac{p+2}{2(p+1)}\right)^{l}\left(\frac{p+2}{2(p+1)}\right)^{k}.
  $$
  Let $X_{\alpha}^{k}$ be negative binomial, $NB(k,\alpha)$, where $\alpha=\frac{p+2}{2(p+1)}$, then
  $$
  a_{k}(l)=\frac{2^{k}k!(k-1)!}{\Gamma(2k+1)}\left(\frac{2(p+1)}{p+2}\right)^{k}P(X_{\alpha}^{k}=l),
  $$
  and 
  $$
  \Sigma_{1}=C_{p}\frac{2^{k}k!(k-1)!}{\Gamma(2k+1)}\left(\frac{2(p+1)}{p+2}\right)^{k}\sum^{k-1}_{l=p}\frac{1}{l^{\frac{p}{2}}}P(X_{\alpha}^{k}=l).
  $$
  We claim that, as  $k\rightarrow+\infty$,  
  \begin{align}
  \sum_{l=p}^{k-1}\frac{1}{l^{\frac{p}{2}}}P(X_{\alpha}^{k}=l)\sim \frac{1}{l_{0}^{\frac{p}{2}}}\sum_{l=p}^{k-1}P(X_{\alpha}^{k}=l)\sim \frac{1}{l_{0}^{\frac{p}{2}}}, \label{negative_binomial_claim}
  \end{align}
   where $l_{0}=\frac{(1-\alpha) k}{\alpha}$.
  Therefore, 
 $$
 \Sigma_{1}\sim \frac{1}{l_{0}^{\frac{p}{2}}}C_{p}\frac{4^{k}k!(k-1)!}{\Gamma(2k+1)}\left(\frac{p+1}{p+2}\right)^{k}.
 $$ 
 Then, by Stirling's formula (\ref{stirling_formula}), we know 
  $$
  A_{k,p+1}\sim \Sigma_{1}\sim C_{p}\sqrt{\pi}\left(\frac{p+2}{p}\right)^{p/2}\frac{1}{k^{\frac{p+1}{2}}}\left(\frac{p+1}{p+2}\right)^{k}.
  $$
 This lemma is thus proved. Now we only need to show claim (\ref{negative_binomial_claim}).
   Indeed,
  $$
  \frac{\sum_{l=p}^{k-1}\frac{1}{l^{\frac{p}{2}}}P(X_{\alpha}^{k}=l)}{\frac{1}{l_{0}^{\frac{p}{2}}}}=\sum_{l=p}^{k-1}\left(\sqrt{\frac{l_{0}}{l}}\right)^{p}P(X_{\alpha}^{k}=l).  
  $$
  $\forall\epsilon>0,$ we have
  \begin{align*}
&\sum_{l=p}^{k-1}\left(\sqrt{\frac{l_{0}}{l}}\right)^{p}P(X_{\alpha}^{k}=l)\\
=&
  \sum_{p\leq l\leq l_{0}(1-\epsilon)}\left(\sqrt{\frac{l_{0}}{l}}\right)^{p}P(X_{\alpha}^{k}=l)+\sum_{l_{0}(1-\epsilon) \leq l\leq l_{0}(1+\epsilon)}\left(\sqrt{\frac{l_{0}}{l}}\right)^{p}P(X_{\alpha}^{k}=l)\\
  +&\sum_{l_{0}(1+\epsilon) \leq l\leq k-1}\left(\sqrt{\frac{l_{0}}{l}}\right)^{p}P(X_{\alpha}^{k}=l), 
  \end{align*}
  where
  \begin{align*}
  0\leq \sum_{p\leq l\leq l_{0}(1-\epsilon)}\left(\sqrt{\frac{l_{0}}{l}}\right)^{p}P(X_{\alpha}^{k}=l)\leq &\left(\sqrt{\frac{l_{0}}{p}}\right)^{p}\sum_{p\leq l\leq l_{0}(1-\epsilon)}P(X_{\alpha}^{k}=l)\\
  =&\left(\sqrt{\frac{l_{0}}{p}}\right)^{p}P(X_{\alpha}^{k}\leq l_{0}(1-\epsilon)),
  \end{align*}
  \begin{align*}
   0\leq \sum_{ l_{0}(1+\epsilon)\leq l\leq k-1}\left(\sqrt{\frac{l_{0}}{l}}\right)^{p}P(X_{\alpha}^{k}=l)\leq &\left(\sqrt{\frac{l_{0}}{l_{0}(1+\epsilon)}}\right)^{p}\sum_{l_{0}(1+\epsilon)\leq l\leq k-1}P(X_{\alpha}^{k}=l)\\
   \leq&\left(\sqrt{\frac{1}{1+\epsilon}}\right)^{p}P(X_{\alpha}^{k}\geq l_{0}(1+\epsilon)) ,
 \end{align*}
  and
 \begin{align*}
 &\left(\sqrt{\frac{l_{0}}{l_{0}(1+\epsilon)}}\right)^{p}P(l_{0}(1-\epsilon)\leq X_{\alpha}^{k}
 \leq l_{0}(1+\epsilon))\\
 \leq&\sum_{l_{0}(1-\epsilon) \leq l\leq l_{0}(1+\epsilon)}\left(\sqrt{\frac{l_{0}}{l}}\right)^{p}P(X_{\alpha}^{k}=l)\\
 \leq&\left(\sqrt{\frac{l_{0}}{l_{0}(1-\epsilon)}}\right)^{p}P(l_{0}(1-\epsilon)\leq X_{\alpha}^{k}\leq l_{0}(1+\epsilon)) .
 \end{align*}
  By LDP for $NB(k,\alpha)$ in example \ref{LDP_EXAM}, we have
  $$
  P(X_{\alpha}^{k}\leq l_{0}(1-\epsilon))\sim e^{-k\left[\inf_{x<(-\epsilon +1)\frac{1-\alpha}{\alpha}} I_{1}(x)\right]},
  $$
  and
  $$
  P(X_{\alpha}^{k}\geq l_{0}(1+\epsilon))\sim e^{-k\left[\inf_{x>(\epsilon +1)\frac{1-\alpha}{\alpha}} I_{1}(x)\right]} .   
  $$
  Therefore,  as  $k\rightarrow+\infty$,
    $$
  \sum_{p\leq l\leq l_{0}(1-\epsilon)}\left(\sqrt{\frac{l_{0}}{l}}\right)^{p}P(X_{\alpha}^{k}=l)\rightarrow0 ,
  $$
  and 
  $$
  \sum_{l_{0}(1+\epsilon) \leq l\leq k-1}\left(\sqrt{\frac{l_{0}}{l}}\right)^{p}P(X_{\alpha}^{k}=l)  \rightarrow0. 
  $$
  By the central limit theorem of $X_{\alpha}^{k}$, if we let $k\rightarrow+\infty$, then $\epsilon\rightarrow0$, we have 
  $$
  \sum_{l_{0}(1-\epsilon) \leq l\leq l_{0}(1+\epsilon)}\left(\sqrt{\frac{l_{0}}{l}}\right)^{p}P(X_{\alpha}^{k}=l)  \rightarrow1. 
  $$
  The claim (\ref{negative_binomial_claim}) is thus proved.
  \end{proof}

  \textbf{ [PROOF OF LEMMA \ref{COEF2}]:}
  \begin{proof}
  Define 
  $$
  \Sigma_{2}=\sum_{s=0}^{k-l} \binom{k}{s}\left(\frac{\lambda-l}{\lambda}\right)^{s}C_{l}\frac{1}{(k-s)^{\frac{l}{2}}}\left(\frac{l}{l+1}\right)^{k-s}.
  $$
  Then 
  $$
  \Sigma_{2}=C_{l}\left(\frac{\lambda-l}{\lambda}+\frac{l}{l+1}\right)^{k}\sum_{s=0}^{k-l} \frac{1}{(k-s)^{\frac{l}{2}}}P(X_{\beta}^{k}=s),
  $$
  where $X_{\beta}^{k}$ follows binomial distribution $B(k,\beta)$, with $\beta=\frac{\frac{\lambda-l}{\lambda}}{\frac{\lambda-l}{\lambda}+\frac{l}{l+1}}.$
  
  Next, we show, as $k\rightarrow+\infty$,
    \begin{align}
  \sum_{s=0}^{k-l} \frac{1}{(k-s)^{\frac{l}{2}}}P(X_{\beta}^{k}=s)\sim\left( \frac{\beta}{(1-\beta)s_{0}}\right)^{\frac{l}{2}},\label{binomial_assumption}
  \end{align} 
  where $s_{0}=\beta k$. 
  To this end, $\forall \epsilon>0$, let us consider 
  $$
  s_{0}^{\frac{l}{2}}\sum_{s=0}^{k-l}\frac{1}{(k-s)^{\frac{l}{2}}}P(X_{\beta}^{k}=s)=\sum_{s=0}^{k-l}\left(\sqrt{\frac{s_{0}}{k-s}}\right)^{l}P(X_{\beta}^{k}=s).
  $$
  $\forall\epsilon>0,$ we have, 
 \begin{align*}
 &\sum_{s=0}^{k-l}\left(\sqrt{\frac{s_{0}}{k-s}}\right)^{l}P(X_{\beta}^{k}=s)\\
 =&\sum_{0\leq s\leq s_{0}(1-\epsilon)}\left(\sqrt{\frac{s_{0}}{k-s}}\right)^{l}P(X_{\beta}^{k}=s)+\sum_{(1-\epsilon)s_{0}\leq s\leq (1+\epsilon)s_{0}}\left(\sqrt{\frac{s_{0}}{k-s}}\right)^{l}P(X_{\beta}^{k}=s)\\
 +&\sum_{(1+\epsilon)s_{0}\leq s\leq k-1}\left(\sqrt{\frac{s_{0}}{k-s}}\right)^{l}P(X_{\beta}^{k}=s),
 \end{align*}
 where
  $$
  0\leq \sum_{0\leq s\leq s_{0}(1-\epsilon)}\left(\sqrt{\frac{s_{0}}{k-s}}\right)^{l}P(X_{\beta}^{k}=s)\leq  \left(\sqrt{\frac{s_{0}}{k-s_{0}(1-\epsilon)}}\right)^{l}P(X_{\beta}^{k}\leq s_{0}(1-\epsilon)) 
 $$
 and 
 $$
 0\leq \sum_{(1+\epsilon)s_{0}\leq s\leq k-1}\left(\sqrt{\frac{s_{0}}{k-s}}\right)^{l}P(X_{\beta}^{k}=s)\leq  \left(\sqrt{\frac{s_{0}}{1}}\right)^{l}P(X_{\beta}^{k}\geq s_{0}(1+\epsilon)).
 $$ 
 Then by the LDP for $B(k,\beta)$ in example \ref{LDP_EXAM}, we have 
 $$
 \lim_{k\rightarrow+\infty}\sum_{0\leq s\leq s_{0}(1-\epsilon)}\left(\sqrt{\frac{s_{0}}{k-s}}\right)^{l}P(X_{\beta}^{k}=s)=0,
 $$
and
$$
\lim_{k\rightarrow+\infty}\sum_{k-l\geq s\geq s_{0}(1+\epsilon)}\left(\sqrt{\frac{s_{0}}{k-s}}\right)^{l}P(X_{\beta}^{k}=s)=0.
$$
Moreover, 
\begin{align*}
&\left(\sqrt{\frac{s_{0}}{k-(1+\epsilon)s_{0}}}\right)^{l}P((1-\epsilon)s_{0}\leq X_{\beta}^{k}\leq (1+\epsilon)s_{0})\\
\leq &\sum_{(1-\epsilon)s_{0}\leq s\leq (1+\epsilon)s_{0}}\left(\sqrt{\frac{s_{0}}{k-s}}\right)^{l}P(X_{\beta}^{k}=s)\\
\leq&\left(\sqrt{\frac{s_{0}}{k-(1-\epsilon)s_{0}}}\right)^{l}P((1-\epsilon)s_{0}\leq X_{\beta}^{k}\leq (1+\epsilon)s_{0}).
\end{align*}
Analogously, if we let $k\rightarrow\infty$ and $\epsilon\rightarrow0$, then we have 
$$
\sum_{(1-\epsilon)s_{0}\leq s\leq (1+\epsilon)s_{0}}\left(\sqrt{\frac{s_{0}}{k-s}}\right)^{l}P(X_{\beta}^{k}=s)\rightarrow\left(\frac{\beta}{1-\beta}\right)^{\frac{l}{2}},
$$
due to the central limit theorem of binomial distributions. 
Therefore, 
$$
\Sigma_{2}\sim C_{l}\left(\frac{1}{1-\beta}\right)^{\frac{l}{2}}\frac{1}{k^{\frac{l}{2}}}\left(\frac{\lambda-l}{\lambda}+\frac{l}{l+1}\right)^{k}.
$$ 
 Next we are going to show $C_{k,l}\sim \Sigma_{2}.$ Because of Lemma \ref{COEF1} $, \forall \epsilon>0, \exists M> l$, such that $\forall k-s>M$,
$$
1-\epsilon\leq \frac{A_{k-s,l}}{C_{l}\frac{1}{(k-s)^{\frac{l}{2}}}(\frac{l}{l+1})^{k-s}}\leq 1+\epsilon. 
$$ 
Then we rewrite $C_{k,l}$ as $A+B,$
where 
$$
A=\sum_{s=0}^{k-M-1}\binom{k}{s}\left(\frac{\lambda-l}{\lambda}\right)^{s}A_{k-s,l}
$$
and
$$
B=\sum_{k-M\leq s\leq k-l}\binom{k}{s}\left(\frac{\lambda-l}{\lambda}\right)^{s}A_{k-s,l}.
$$
Since 
$$
0\leq B\leq \max_{k-M\leq s\leq k-l}\{A_{k-s,l}\}\binom{k}{k-M}\left(\frac{\lambda-l}{\lambda}\right)^{k-M},
$$
and 
\begin{align*}
0\leq \frac{B}{\Sigma_{2}}\leq&\frac{\max_{k-M\leq s\leq k-l}\{A_{k-s,l}\}\binom{k}{k-M}(\frac{\lambda-l}{\lambda})^{k-M}}{\Sigma_{2}}\\
\sim&\frac{\max_{k-M\leq s\leq k-l}\{A_{k-s,l}\}}{C_{l}M!}\left(\frac{\lambda}{\lambda-l}\right)^{M}(1-\beta)^{\frac{l}{2}}k^{\frac{l}{2}+M}\left(\frac{\frac{\lambda-l}{\lambda}}{\frac{\lambda-l}{\lambda}+\frac{l}{l+1}}\right)^{k}\\
\rightarrow&0.
\end{align*}
we can easily see $\lim_{k\rightarrow+\infty}\frac{B}{\Sigma_{2}}=0.$
Moreover,
\begin{align*}
&(1-\epsilon)\sum_{0\leq s\leq k-M-1}\binom{k}{s}\left(\frac{\lambda-l}{\lambda}\right)^{s}C_{l}\frac{1}{(k-s)^{\frac{l}{2}}}\left(\frac{l}{l+1}\right)^{k-s}\\
\leq &A\\
\leq &(1+\epsilon)\sum_{0\leq s\leq k-M-1}\binom{k}{s}\left(\frac{\lambda-l}{\lambda}\right)^{s}C_{l}\frac{1}{(k-s)^{\frac{l}{2}}}\left(\frac{l}{l+1}\right)^{k-s};
\end{align*}
and also
\begin{align*}
&(1-\epsilon)\left(\Sigma_{2}-\sum_{k-M\leq s\leq k-l}\binom{k}{s}\left(\frac{\lambda-l}{\lambda}\right)^{s}C_{l}\frac{1}{(k-s)^{\frac{l}{2}}}\left(\frac{l}{l+1}\right)^{k-s}\right)\\
\leq& A
\leq (1+\epsilon)\left(\Sigma_{2}-\sum_{k-M\leq s\leq k-l}\binom{k}{s}\left(\frac{\lambda-l}{\lambda}\right)^{s}C_{l}\frac{1}{(k-s)^{\frac{l}{2}}}\left(\frac{l}{l+1}\right)^{k-s}\right).
\end{align*}
Since
\begin{align*}
0\leq &\sum_{k-M\leq s\leq k-l}\binom{k}{s}\left(\frac{\lambda-l}{\lambda}\right)^{s}C_{l}\frac{1}{(k-s)^{\frac{l}{2}}}\left(\frac{l}{l+1}\right)^{k-s}\\
\leq&C_{l}\frac{1}{l^{\frac{l}{2}}}(\frac{l}{l+1})^{l}\binom{k}{k-M}\left(\frac{\lambda-l}{\lambda}\right)^{k-M},
\end{align*}
we have
\begin{align*}
0\leq&\frac{\sum_{k-M\leq s\leq k-l}\binom{k}{s}(\frac{\lambda-l}{\lambda})^{s}C_{l}\frac{1}{(k-s)^{\frac{l}{2}}}(\frac{l}{l+1})^{k-s}}{\Sigma_{2}}\\
\leq & \frac{C_{l}\frac{1}{l^{\frac{l}{2}}}(\frac{l}{l+1})^{l}\binom{k}{k-M}(\frac{\lambda-l}{\lambda})^{k-M}}{\Sigma_{2}}
\rightarrow0\mbox{ as } k\rightarrow\infty.
\end{align*}
Hence, letting $k\rightarrow\infty$, $\epsilon\rightarrow0$, we have $A\sim \Sigma_{2}$. Thus, $C_{k,l}\sim \Sigma_{2}$. Then,  
\begin{align*}
C_{k,l}\sim \Sigma_{2}\sim &C_{l}\left(\frac{1}{1-\beta}\right)^{\frac{l}{2}}\frac{1}{k^{\frac{l}{2}}}\left(\frac{\lambda-l}{\lambda}+\frac{l}{l+1}\right)^{k}\\
=&C_{l}\left(1+\frac{(\lambda-l)(l+1)}{\lambda l}\right)^{\frac{l}{2}}\frac{1}{k^{\frac{l}{2}}}\left(\frac{\lambda-l}{\lambda}+\frac{l}{l+1}\right)^{k}.
\end{align*}
\end{proof}
\textbf{[PROOF OF LEMMA \ref{COEF}]:}
\begin{proof}
Let us assume that 
\begin{align}
\lim_{\theta\rightarrow0}\tilde{K}_{n}^{\lambda}(\theta)=(\frac{u-1}{u})^{n} \mbox{ for } u(u-1)<\lambda\leq u(u+1),u>2; \label{final_assumption}
\end{align}
 then we are going to show $\lim_{\theta\rightarrow0}K_{n}^{\lambda}(\theta)=\lim_{\theta\rightarrow0}\tilde{K}_{n}^{\lambda}(\theta).$ Note that $K_{n}^{\lambda}(\theta)$ can be rewritten as 
$$
K_{n}^{\lambda}(\theta)=\frac{\tilde{K}_{n}^{\lambda}(\theta)+F_{n}^{\lambda}(\theta)}{1+G^{\lambda}(\theta)},
$$ 
where 
$$
F_{n}^{\lambda}(\theta)=\frac{\sum_{l=1}^{[\lambda]}\theta^{l}\sum_{k=l}^{\infty}\frac{(\lambda\log\frac{1}{\theta})^{k}}{k!}(A_{k+n,l}(\theta)-A_{k+n,l})}{\sum_{l=1}^{[\lambda]}\theta^{l}\sum_{k=l}^{\infty}\frac{(\lambda\log\frac{1}{\theta})^{k}}{k!}A_{k,l}},
$$
and
$$
G^{\lambda}(\theta)=\frac{\sum_{l=1}^{[\lambda]}\theta^{l}\sum_{k=l}^{\infty}\frac{(\lambda\log\frac{1}{\theta})^{k}}{k!}(A_{k,l}(\theta)-A_{k,l})}{\sum_{l=1}^{[\lambda]}\theta^{l}\sum_{k=l}^{\infty}\frac{(\lambda\log\frac{1}{\theta})^{k}}{k!}A_{k,l}}.
$$
We claim that $\lim_{\theta\rightarrow0}F_{n}^{\lambda}(\theta)=\lim_{\theta\rightarrow0}G^{\lambda}(\theta)=0.$
Then 
$$
\lim_{\theta\rightarrow0}K_{n}^{\lambda}(\theta)=\lim_{\theta\rightarrow0}\tilde{K}_{n}^{\lambda}(\theta)
$$ 
follows.
Indeed, by Lemma \ref{UBMH}, we have
\begin{align*}
0\leq|F_{n}^{\lambda}(\theta)|\leq &\frac{\sum_{l=1}^{[\lambda]}\theta^{l}\sum_{k=l}^{\infty}\frac{(\lambda\log\frac{1}{\theta})^{k}}{k!}|A_{k+n,l}(\theta)-A_{k+n,l}|}{\sum_{l=1}^{[\lambda]}\theta^{l}\sum_{k=l}^{\infty}\frac{(\lambda\log\frac{1}{\theta})^{k}}{k!}A_{k,l}}\\
\leq& \frac{\sum_{l=1}^{[\lambda]}\theta^{l}l\theta\sum_{k=l}^{\infty}\frac{(\lambda\log\frac{1}{\theta})^{k}}{k!}A_{k+n,l}}{\sum_{l=1}^{[\lambda]}\theta^{l}\sum_{k=l}^{\infty}\frac{(\lambda\log\frac{1}{\theta})^{k}}{k!}A_{k,l}}\\
\leq &[\lambda]\theta  \frac{\sum_{l=1}^{[\lambda]}\theta^{l}\sum_{k=l}^{\infty}\frac{(\lambda\log\frac{1}{\theta})^{k}}{k!}A_{k+n,l}}{\sum_{l=1}^{[\lambda]}\theta^{l}\sum_{k=l}^{\infty}\frac{(\lambda\log\frac{1}{\theta})^{k}}{k!}A_{k,l}}=[\lambda]\theta \tilde{K}_{n}^{\lambda}(\theta)\rightarrow0.
\end{align*}
Similarly, by Lemma \ref{UBMH}, we also have
\begin{align*}
0\leq |G^{\lambda}(\theta)|\leq [\lambda]\theta\rightarrow0,\mbox{ as } \theta\rightarrow0. 
\end{align*}
Now we are going to show the assumption (\ref{final_assumption}). 
We can rewrite $\tilde{K}_{n}(\theta)$ as
$$
\sum_{v=1}^{[\lambda]}\frac{\theta^{v}\sum_{k=v}^{\infty}\frac{(\lambda\log\frac{1}{\theta})^{k}}{k!}A_{k,v}}{\sum_{l=1}^{[\lambda]}\theta^{l}\sum_{k=l}^{\infty}\frac{(\lambda\log\frac{1}{\theta})^{k}}{k!}A_{k,l}}\frac{\sum_{k=v}^{\infty}\frac{(\lambda\log\frac{1}{\theta})^{k}}{k!}A_{k+n,v}}{\sum_{k=v}^{\infty}\frac{(\lambda\log\frac{1}{\theta})^{k}}{k!}A_{k,v}}.
$$
Then,
$$
\lim_{\theta\rightarrow0}\tilde{K}_{n}^{\lambda}(\theta)=\sum_{v=1}^{[\lambda]}\lim_{\theta\rightarrow0}\frac{\theta^{v}\sum_{k=v}^{\infty}\frac{(\lambda\log\frac{1}{\theta})^{k}}{k!}A_{k,v}}{\sum_{l=1}^{[\lambda]}\theta^{l}\sum_{k=l}^{\infty}\frac{(\lambda\log\frac{1}{\theta})^{k}}{k!}A_{k,l}}\lim_{\theta\rightarrow0}\frac{\sum_{k=v}^{\infty}\frac{(\lambda\log\frac{1}{\theta})^{k}}{k!}A_{k+n,v}}{\sum_{k=v}^{\infty}\frac{(\lambda\log\frac{1}{\theta})^{k}}{k!}A_{k,v}}.
$$
By Lemma \ref{ML} and Lemma \ref{COEF1}, we know
$$
\lim_{\theta\rightarrow0}\frac{\sum_{k=v}^{\infty}\frac{(\lambda\log\frac{1}{\theta})^{k}}{k!}A_{k+n,v}}{\sum_{k=v}^{\infty}\frac{(\lambda\log\frac{1}{\theta})^{k}}{k!}A_{k,v}}=(\frac{v}{v+1})^{n}.
$$
Then we need to show 
\begin{align}
\lim_{\theta\rightarrow0}\frac{\theta^{v}\sum_{k=v}^{\infty}\frac{(\lambda\log\frac{1}{\theta})^{k}}{k!}A_{k,v}}{\sum_{l=1}^{[\lambda]}\theta^{l}\sum_{k=l}^{\infty}\frac{(\lambda\log\frac{1}{\theta})^{k}}{k!}A_{k,l}}=\delta_{(u-1)}(v).\label{limit_coef}
\end{align}
Once we have obtained this, then $\lim_{\theta\rightarrow0}\tilde{K}_{n}^{\lambda}(\theta)=\sum_{v=1}^{[\lambda]}\delta_{(u-1)}(v)(\frac{v}{v+1})^{n}=(\frac{u-1}{u})^{n}.$ To this end, both the numerator and the denumerator of 
$$
\frac{\theta^{v}\sum_{k=v}^{\infty}\frac{(\lambda\log\frac{1}{\theta})^{k}}{k!}A_{k,v}}{\sum_{l=1}^{[\lambda]}\theta^{l}\sum_{k=l}^{\infty}\frac{(\lambda\log\frac{1}{\theta})^{k}}{k!}A_{k,l}},
$$
are divided by $\theta^{\lambda}$.
 Thus, we need consider
$$
\frac{\theta^{v-\lambda}\sum_{k=v}^{\infty}\frac{(\lambda\log\frac{1}{\theta})^{k}}{k!}A_{k,v}}{\sum_{l=1}^{[\lambda]}\theta^{l-\lambda}\sum_{k=l}^{\infty}\frac{(\lambda\log\frac{1}{\theta})^{k}}{k!}A_{k,l}}.
$$
Since $1\leq v\leq [\lambda]\leq \lambda$, it is not difficult to see that 
\begin{align*}
\theta^{v-\lambda}\sum_{k=v}^{\infty}\frac{(\lambda\log\frac{1}{\theta})^{k}}{k!}A_{k,v}=&(\frac{1}{\theta})^{\lambda-v}\sum_{k=v}^{\infty}\frac{(\lambda\log\frac{1}{\theta})^{k}}{k!}A_{k,v}\\
 =&\left(\sum_{s=0}^{\infty}\frac{(\log\frac{1}{\theta})^{s}}{s!}(\lambda-v)^{s}\right)\left(\sum_{k=v}^{\infty}\frac{(\lambda\log\frac{1}{\theta})^{k}}{k!}A_{k,v}\right)\\
 =&\sum_{k=v}^{\infty}\frac{(\lambda\log\frac{1}{\theta})^{k}}{k!}C_{k,v},
\end{align*}
where 
$$
C_{k,v}=\sum_{s=0}^{k-v}\binom{k}{s}\left(\frac{\lambda-v}{\lambda}\right)^{s}A_{k-s,v}.
$$
Then 
$$
\frac{\theta^{v-\lambda}\sum_{k=v}^{\infty}\frac{(\lambda\log\frac{1}{\theta})^{k}}{k!}A_{k,v}}{\sum_{l=1}^{[\lambda]}\theta^{l-\lambda}\sum_{k=l}^{\infty}\frac{(\lambda\log\frac{1}{\theta})^{k}}{k!}A_{k,l}}=\frac{\sum_{k=v}^{\infty}\frac{(\lambda\log\frac{1}{\theta})^{k}}{k!}C_{k,v}}{\sum_{l=1}^{[\lambda]}\sum_{k=l}^{\infty}\frac{(\lambda\log\frac{1}{\theta})^{k}}{k!}C_{k,l}}.
$$
Thus, to figure out the limit (\ref{limit_coef}), we must find the leading term among 
$$
\sum_{k=l}^{\infty}\frac{(\lambda\log\frac{1}{\theta})^{k}}{k!}C_{k,l}, 1\leq l\leq [\lambda].
$$ 
By Lemma \ref{ML} and Lemma \ref{COEF2}, we have
\begin{align*}
&\lim_{\theta\rightarrow0}\frac{\sum_{k=v}^{\infty}\frac{(\lambda\log\frac{1}{\theta})^{k}}{k!}C_{k,v}}{\sum_{k=l}^{\infty}\frac{(\lambda\log\frac{1}{\theta})^{k}}{k!}C_{k,l}}=\lim_{k\rightarrow+\infty}\frac{C_{k,v}}{C_{k,l}}\\
=&\lim_{k\rightarrow+\infty}\frac{(1+\frac{(\lambda-v)(v+1)}{\lambda v})^{\frac{v}{2}}}{(1+\frac{(\lambda-l)(l+1)}{\lambda l})^{\frac{l}{2}}}\frac{1}{k^{\frac{v-l}{2}}}\left(\frac{\frac{\lambda-v}{\lambda}+\frac{v}{v+1}}{\frac{\lambda-l}{\lambda}+\frac{l}{l+1}}\right)^{k}.
\end{align*}
To find the leading term, we need to figure out the maximum term among $\frac{\lambda-l}{\lambda}+\frac{l}{l+1}, 1\leq l\leq [\lambda].$ Consider $f(x)=\frac{\lambda-x}{\lambda}+\frac{x}{x+1}=2-(\frac{x}{\lambda}+\frac{1}{x+1});$ then
$$
f^{'}(x)=\frac{1}{(x+1)^{2}}-\frac{1}{\lambda}.
$$
We know $f^{'}(x)$ is
$$
\begin{cases}
\geq0, & \mbox{ if }x\leq \sqrt{\lambda}-1\\
< 0,  &\mbox{ if } x> \sqrt{\lambda}-1.
\end{cases}
$$
Therefore, $\frac{\lambda-l}{\lambda}+\frac{l}{l+1}$ attains its maximum at $[\sqrt{\lambda}]-1$ or $[\sqrt{\lambda}]$.

\textbf{Case 1:} For $(u-1)u<\lambda<u^{2}, [\sqrt{\lambda}]=u-1$, since
$$
f(u-2)=2-(\frac{u-2}{\lambda}+\frac{1}{u-1})
$$
and
$$
f(u-1)=2-(\frac{u-1}{\lambda}+\frac{1}{u}),
$$
 $f(u-2)-f(u-1)=\frac{1}{\lambda}-\frac{1}{u(u-1)}<0.$ So $f(u-1)$ is the maximum term.
$$
\frac{\frac{\lambda-l}{\lambda}+\frac{l}{l+1}}{\frac{\lambda-u+1}{\lambda}+\frac{u-1}{u}}<1,\forall 1\leq l\leq[\lambda], l\neq u-1.
$$
Thus
$$
\lim_{k\rightarrow+\infty}\frac{C_{k,l}}{C_{k,u-1}}=\delta_{u-1}(l),\forall 1\leq l\leq[\lambda].
$$

\textbf{Case 2:} For $u^{2}\leq \lambda<u(u+1), [\sqrt{\lambda}]=u$; then the maximum term of $\frac{\lambda-l}{\lambda}+\frac{l}{l+1}$ should be $f(u-1)$ or $f(u)$. Since
$$
f(u-1)-f(u)=\frac{1}{\lambda}-\frac{1}{u(u+1)}>0,
$$
we can see $f(u-1)$ is the maximum term; and 
$$
\frac{\frac{\lambda-l}{\lambda}+\frac{l}{l+1}}{\frac{\lambda-u+1}{\lambda}+\frac{u-1}{u}}<1,\forall 1\leq l\leq[\lambda], l\neq u-1.
$$
Thus, 
$$
\lim_{k\rightarrow+\infty}\frac{C_{k,l}}{C_{k,u-1}}=\delta_{u-1}(l),\forall 1\leq l\leq[\lambda].
$$

\textbf{Case 3:} For $\lambda=u(u+1), [\sqrt{\lambda}]=u$, then the maximum term of $\frac{\lambda-l}{\lambda}+\frac{l}{l+1}$ is $f(u-1)$ or $f(u)$. Since
$$
f(u-1)-f(u)=\frac{1}{\lambda}-\frac{1}{u(u+1)}=0,
$$
one can have 
$$
\frac{\frac{\lambda-l}{\lambda}+\frac{l}{l+1}}{\frac{\lambda-u+1}{\lambda}+\frac{u-1}{u}}<1,\forall 1\leq l\leq[\lambda], l\neq u,u-1,
$$
and
$$
\frac{\frac{\lambda-u+1}{\lambda}+\frac{u-1}{u}}{\frac{\lambda-u}{\lambda}+\frac{u}{u+1}}=1.
$$
But 
$$
\lim_{k\rightarrow+\infty}\frac{C_{k,u}}{C_{k,u-1}}=\lim_{k\rightarrow+\infty}\frac{(1+\frac{(\lambda-u)(u+1)}{\lambda u})^{\frac{u}{2}}}{(1+\frac{(\lambda-u+1)u}{\lambda(u-1)})^{\frac{u-1}{2}}}\frac{1}{k^{\frac{1}{2}}}=0;
$$
therefore, $C_{k,u-1}$ is the leading term among $C_{k,l}, 1\leq l\leq [\lambda]$. We have
$$
\lim_{k\rightarrow+\infty}\frac{C_{k,l}}{C_{k,u-1}}=\delta_{u-1}(l),1\leq l\leq [\lambda].
$$
Thus, 
$$
\lim_{\theta\rightarrow0}\frac{\theta^{v}\sum_{k=v}^{\infty}\frac{(\lambda\log\frac{1}{\theta})^{k}}{k!}A_{k,v}}{\sum_{l=1}^{[\lambda]}\theta^{l}\sum_{k=l}^{\infty}\frac{(\lambda\log\frac{1}{\theta})^{k}}{k!}A_{k,l}}=\delta_{(u-1)}(v), 1\leq v\leq [\lambda].
$$
\end{proof}
\section{Acknowledgement}  
 This work is part of my PhD thesis. I would like to thank my PhD supervisor Shui Feng for suggesting this problem to me. Without him, I probably would never have the chance to obtain this work. I also would like to thank anonymous referees for insightful comments.
  
\bibliography{final}{}
\bibliographystyle{apalike} 

\end{document}